\newcounter{algo@row}
\newcounter{algo@rowindent}
\newcommand{\algofont}[1]{\textbf{#1}}% S1
\newcommand{\algonumbersize}[1]{\scriptsize{#1}}% S2
\newcommand{\algopreitem}[1][\arabic{algo@row}]{\texttt{\algonumbersize{#1}}}
\newcommand{\algoitemskip}{\hspace{\value{algo@rowindent}cc}}
\newenvironment{algo}{\vskip.3em\small%
  \begin{list}{\algopreitem\texttt{\algonumbersize{:}}}{%
      \usecounter{algo@row}%
      \setcounter{algo@rowindent}{0}%
      \setlength{\itemindent}{2em}%
      \setlength{\labelwidth}{2em}% indent=1em
      \setlength{\parsep}{0cm}%
    }%
}{
  \end{list}\vskip-.5em
}
\newcommand{\algonewnestedopen}[2]{
  \newcommand{#1}[1][]{%
    \ifthenelse{\equal{##1}{}}{\item}{\item[{\algopreitem[##1]}]}
    \algoitemskip\algofont{#2}%
    \addtocounter{algo@rowindent}{1}%
    \ignorespaces
  }
}
\newcommand{\algonewnestedaux}[2]{
  \newcommand{#1}[1][]{
    \addtocounter{algo@rowindent}{-1}
    \ifthenelse{\equal{##1}{}}{\item}{\item[{\algopreitem[##1]}]}
    \algoitemskip\algofont{#2}%
    \addtocounter{algo@rowindent}{+1}%
    \ignorespaces
  }
}
\newcommand{\algonewnestedclose}[2]{
  \newcommand{#1}[1][]{
    \addtocounter{algo@rowindent}{-1}
    \ifthenelse{\equal{##1}{}}{\item}{\item[{\algopreitem[##1]}]}
    \algoitemskip\algofont{#2}%
    \ignorespaces
  }
}
\newcommand{\algonewcommand}[2]{
  \newcommand{#1}[1][default]{
    \ifthenelse{\equal{##1}{default}}{\item}{\item[{\algopreitem[##1]}]}%
    \algoitemskip\algofont{#2}%
    \ignorespaces
  }%
}
\newcommand{\algonewkeyword}[2]{\newcommand{#1}{\algofont{#2}}}
\algonewcommand{\STATE}{\ignorespaces}
\algonewcommand{\INPUT}{Input: }
\algonewcommand{\pINPUT}{\phantom{Input: }}
\algonewcommand{\COMPUTE}{Compute: }
\algonewcommand{\OUTPUT}{Output: }
\algonewcommand{\pOUTPUT}{\phantom{Output: }}
\newcommand{\COMMENT}{\hfill // }
\algonewnestedopen{\IF}{if }
\algonewnestedaux{\ELSEIF}{else if }
\algonewnestedaux{\ELSE}{else }
\algonewnestedclose{\ENDIF}{end if }
\algonewnestedopen{\FOR}{for }
\algonewnestedclose{\ENDFOR}{end for }
\algonewnestedopen{\WHILE}{while }
\algonewnestedclose{\ENDWHILE}{end while }
\algonewcommand{\BREAK}{break}%
\algonewkeyword{\For}{for }%
\algonewkeyword{\To}{to }%
\algonewkeyword{\Do}{do }%
\algonewkeyword{\If}{if }%
\algonewkeyword{\REPEAT}{repeat }
\algonewkeyword{\Then}{then }%
\algonewkeyword{\Else}{else }%
\algonewkeyword{\End}{end }%
\algonewkeyword{\AND}{and }%
\algonewkeyword{\True}{true }%
\algonewkeyword{\False}{false }%
\algonewkeyword{\Call}{call }%
\algonewkeyword{\irbleigs}{irbleigs }%
\algonewkeyword{\tridiag}{tridiag}%
\algonewkeyword{\reorth}{reorth}%
\algonewkeyword{\Until}{until}
\def\VV{\mathcal V}
\def\PP{\mathcal P}
\def\RR{\mathbb R}
\def\PP{\mathcal P}
\title{An Accelerated Newton-GMRES Method for Multilinear PageRank\thanks{This manuscript is currently under peer review in an international journal.}}
\author{
%A. H. Bentbib\thanks{Laboratory LAMAI, Faculty of Sciences and Technologies, Cadi Ayyad
%University, Marrakech, Morocco. E-mail: a.bentbib@uca.ac.ma}
%\and 
Maryam Boubekraoui\thanks{Laboratory LAMAI, Faculty of Sciences and Technologies, Cadi
Ayyad University, Marrakech, Morocco. E-mail: maryam.boubekraoui@ced.uca.ma}
\and Ridwane Tahiri \thanks{Laboratory LAMAI, Faculty of Sciences and Technologies, Cadi
Ayyad University, Marrakech, Morocco. E-mail: tahiri0ridwane@gmail.com }
}
\begin{document}

\maketitle
\begin{abstract}
Modeling complex multiway relationships in large-scale networks is becoming more and more challenging in data science. The multilinear PageRank problem, arising naturally in the study of higher-order Markov chains, is a powerful framework for capturing such interactions, with applications in web ranking, recommendation systems, and social network analysis. It extends the classical Google PageRank model to a tensor-based formulation, leading to a nonlinear system that captures multi-way dependencies between states. Newton-based methods can achieve local quadratic convergence for this problem, but they require solving a large linear system at each iteration, which becomes too costly for large-scale applications. To address this challenge, we present an accelerated Newton-GMRES method that leverages Krylov subspace techniques to approximate the Newton step without explicitly forming the large Jacobian matrix. We further employ vector extrapolation methods,  including Minimal Polynomial Extrapolation (MPE), Reduced Rank Extrapolation (RRE), and Anderson Acceleration (AA), to improve the convergence rate and enhance numerical stability. Extensive experiments on synthetic and real-world data demonstrate that the proposed approach significantly outperforms classical Newton-based solvers in terms of efficiency, robustness, and scalability.
\end{abstract}

% REQUIRED
\begin{keywords}
Multilinear PageRank, Higher-order Markov chains, Tensor methods, Krylov subspace, Newton-GMRES, Vector extrapolation.
\end{keywords}

% REQUIRED
\begin{AMS}
65F15, 65F10, 15A18, 15A69, 65B05, 65K05, 65Y20, 05C81.
\end{AMS}

\section{Introduction}\label{sec1}
When a user writes a search query, the Google search engine captures a huge set of web pages that include words similar to those entered in search web pages with a set of the same words. These pages were important because Google created a system of scores that classified them called PageRank \cite{brin1998pagerank}. The PageRank algorithm computes a ranking for web pages based on the structure of the web. We look at the web as a huge directed graph whose nodes are all the web pages and whose edges are constituted by all the links between pages. Its primary goal is to determine the stationary distribution of a Markov chain that models the behavior of a random surfer. This web behavior can be represented by a key matrix called the Google matrix $P$. To define its elements, we consider $deg(i)$ as the number of pages that can be reached by a direct link from page $i$. Then, the elements of $P$ are defined as $p_{ij}=\frac{1}{deg(i)}$, if $deg(i) >0$ and there exists a link on the web from page $i$ to a certain page $j \neq i$.  In the case of rows $i$ where $deg(i) = 0$ we assume $p_{ij} = \frac{1}{n}$, where $n$ is the size of the
matrix, i.e., the cardinality of all the web pages. Let $\alpha$ be a probability less than one. With $\alpha$ a random surfer, it randomly transitions according to the column stochastic matrix $P$, and with probability $1-\alpha$ according to a fixed distribution given by the column stochastic vector $v$ \cite{gleich2015pagerank}.

\noindent The rank of each web page is represented in PageRank vector $\mathbf{x}$, reflecting its
position in the search results, which is the solution of the following linear system
 \begin{equation}
     \mathbf{x}=\alpha P \mathbf{x}+(1-\alpha) v.
 \end{equation}
To enhance ranking quality and address challenges in web networks, Gleich et al. expanded PageRank to higher orders by using the higher-order version of the Markov processes. This extension introduces memory into the process, meaning that the state at any given step relies on the previous $m$ states. This captures a memory of $m$ steps, unlike the standard Markov chain, which only takes the current state into account. Mathematically, an order-$m$ Markov chain $S$ is a stochastic process that satisfies
\begin{equation}
    \Pr(S_t = i_1 \mid S_{t-1} = i_2, \dots, S_1 = i_t) = \Pr(S_t = i_1 \mid S_{t-1} = i_2, \dots, S_{t-m} = i_{m+1}).
\end{equation}
Higher-order PageRank model is introduced by modeling a random surfer using a higher-order Markov chain. The surfer transitions according to this higher-order chain with a probability of $\alpha$, while with a probability of $1-\alpha$, the surfer teleports based on the distribution $v$. The limiting probability distribution of this chain is the higher-order PageRank, as explained in \cite{gleich2015multilinear, boubekraoui2023vector} and the references therein. However, for very large web graphs, this approach becomes impractical due to the memory needed to represent the solution.  In consequence, an approximation for the higher-order PageRank, referred to as a multilinear PageRank vector, is introduced in \cite{gleich2015multilinear}.  The multilinear PageRank problem consists in finding a stochastic solution $\mathbf{x} \in \RR^{n}$ to the following equation
\begin{equation}\label{mpr}
    \mathbf{x}=\alpha \PP \mathbf{x}^{m-1}+ (1-\alpha) v,
\end{equation}
where $\PP \mathbf{x}^{m-1}$ is a vector in $\RR^{n}$, whose $i$-th component is
\begin{equation}
   \sum \limits_{i_2,\dots,i_{m}=1}^{n} p_{i i_{2}\dots i_{m}} \mathbf{x}_{i_2}  \dots \mathbf{x}_{i_m},
\end{equation}
With \( \PP = (p_{i i_{2} \dots i_{m}}) \) being an order-\( m \) transition probability tensor that represents a \((m-1)\)th order Markov chain modeling the higher-order interactions between web pages, we naturally have the properties  
\[
p_{i i_2 \dots i_m} \geq 0, \quad \sum_{i=1}^{n} p_{i i_2 \dots i_m} = 1, \quad \text{for all } (i_2 \dots  i_m).
\]
It is possible to express the multilinear PageRank problem (\ref{mpr}) in its matricized form as
\begin{equation}\label{equamuti}
    \mathbf{x}=\alpha R(\underbrace{\mathbf{x} \otimes \mathbf{x} \dots  \otimes \mathbf{x}}_{m-1})+(1-\alpha)v,
\end{equation}
where $R$ is the $n$-by-$n^{m-1}$ fattening of $\PP$ along the first index (see \cite{kolda2009tensor}), and $\otimes$ is the Kronecker product.  Alternatively, the multilinear PageRank problem can also be formulated as a Z-eigenvalue problem \cite{qi2018tensor}, represented by the equation
\begin{equation}\label{multipr} \mathbf{x}=\mathcal{M} \mathbf{x}^{m-1},\quad \text{with}\;\; \mathcal{M}= \alpha \mathcal{P}+ (1-\alpha ) \mathcal{V},  \end{equation}
where $\VV=v \otimes e \otimes \dots \otimes e$ with $\VV_{i_{1},i_{2},..,i_{m}}=v_{i_{1}}$.

\noindent  For tackling the multilinear PageRank equation (\ref{mpr}) or (\ref{equamuti}), numerous algorithms have been developed, with early contributions including the work of Gleich et al.~\cite{gleich2015multilinear}, Meini and Poloni~\cite{meini2018perron}, and Cipolla et al.~\cite{cipolla2020extrapolation}. In their seminal study, Gleich et al.~\cite{gleich2015multilinear} introduced five distinct iterative approaches for this high-dimensional problem, paving the way for subsequent developments. At the heart of these methods lies one of the simplest and most intuitive strategies: the fixed-point iteration, which evolves through the recurrence relation

\begin{equation}
    g(\mathbf{x}_{k}) = \mathbf{x}_{k+1}, \quad k = 0, 1, 2, \dots
\end{equation}
where the function $g(\mathbf{x})$ is defined as
\begin{align}
g: \; \Omega &\longrightarrow \Omega \nonumber \\
\mathbf{x} &\longrightarrow g(\mathbf{x}) = \alpha R(\underbrace{\mathbf{x} \otimes \mathbf{x} \dots \otimes \mathbf{x}}_{m-1}) + (1-\alpha)v, \label{non-equ}
\end{align}
with $\Omega$ represents the probability set given as

\begin{equation}
\Omega = \left\{ \mathbf{x} \in \mathbb{R}^{n}, \; \sum_{i=1}^{n} \mathbf{x}_{i} = 1, \; \mathbf{x}_{i} \geq 0, \; i = 1, \dots, n \right\} \subset \mathbb{R}^{n}.
\end{equation}

 \noindent This iteration scheme mirrors the well-known Higher-Order Power Method (HOPM), a classical approach in tensor computations. Several variants have been proposed to improve its robustness and convergence properties. Notably, the shifted symmetric higher-order power method (SS-HOPM) introduces a shift parameter to stabilize the iteration and enlarge the convergence domain, particularly when dealing with tensors that are not strictly positive~\cite{gleich2015multilinear,kolda2011shifted}. Despite these improvements, fixed-point–type schemes may still converge slowly for large-scale or ill-conditioned multilinear PageRank problems. To address this limitation, acceleration techniques based on vector extrapolation have been successfully employed. For example, Cipolla, Redivo-Zaglia, and Tudisco~\cite{cipolla2020extrapolation} applied the simplified topological $\epsilon$-algorithm in its restarted form to enhance the convergence of fixed-point and SS-HOPM schemes. More recent studies~\cite{bentbib2024extrapolation,boubekraoui2023vector} applied vector extrapolation methods such as Aitken’s $\Delta^2$ process, the minimal polynomial extrapolation (MPE) and  the reduced rank extrapolation (RRE) in a tailored and effective manner, achieving substantial reductions in iteration counts and computational time for fixed-point schemes. Additionally, Lai et al.~\cite{lai2023anderson} introduced the Periodical Anderson Acceleration for Relaxed Fixed-Point Iteration (PAA-RFP), offering another powerful tool for improving convergence rates.

\noindent Beyond fixed-point iteration, another powerful approach to solving equation (\ref{mpr}) is Newton’s method. By leveraging the Newton-Raphson technique, this method iteratively refines the solution to the nonlinear system $f(\mathbf{x}) = g(\mathbf{x}) - \mathbf{x}=0$. For a given initial guess $\mathbf{x}_0 \in \mathbb{R}^n$, the Newton sequence for solving equation (\ref{equamuti}) follows the iteration scheme:
\begin{equation}\label{eqnewton}
 \begin{cases}
\begin{aligned}
&\text{For } k = 0, 1, \dots \text{ until convergence, do:} \\
&\quad \text{Solve} \quad J_f\left(\mathbf{x}_k\right) \delta_k = -f\left(\mathbf{x}_k\right), \\
&\quad \text{and set} \quad \mathbf{x}_{k+1} = \mathbf{x}_k + \delta_k.
\end{aligned}
\end{cases}   
\end{equation}
where $J_{f}\left(\mathbf{x}_{k}\right)$ is the Jacobian matrix of $f$, defined as
\begin{equation}
 J_{f}\left(\mathbf{x}_{k}\right)=\alpha {R}({I} \otimes \mathbf{x}_{k} \otimes \cdots \otimes \mathbf{x}_{k}+\mathbf{x}_{k} \otimes I \otimes\mathbf{x}_{k} \otimes \cdots \otimes \mathbf{x}_{k}+ \dots +\mathbf{x}_{k} \otimes \cdots \otimes \mathbf{x}_{k} \otimes I)-I.  
\end{equation}
Unlike fixed-point iteration, Newton’s method exhibits a quadratic convergence rate when initialized sufficiently close to the solution. However, its efficiency comes at the cost of solving a linear system at each step, which can become computationally expensive, particularly for large-scale problems. Each iteration of Newton’s method typically involves solving a Newton equation, often requiring matrix inversion or using expensive direct methods, which can quickly become prohibitive as the problem size grows. Another issue with Newton’s method for computing the multilinear PageRank vector is its convergence behavior. It has been shown by Gleich et al. \cite{gleich2015multilinear} that for a third-order tensor, Newton's iteration converges quadratically only when the damping factor $\alpha$ is less than $1/2$. When this condition is not met, convergence may become slower or even fail, posing a significant challenge for certain applications.  

\noindent To address this issue, Guo et al. \cite{guo2018modified} proposed a modified Newton method for computing the multilinear PageRank vector. They showed that for a third-order tensor, when $\alpha < 1/2$, and with a suitable initial guess, the sequence of iterative vectors generated by the modified Newton method is monotonically increasing and converges to the solution of the equation (\ref{mpr}). While this approach provides improvements in certain cases, the challenge of matrix inversion remains, and the method's behavior when $\alpha \geq 1/2$ requires further consideration.

\noindent To overcome these challenges and enhance the efficiency and scalability of Newton’s method, this work proposes the use of accelerated nonlinear Krylov methods. These methods aim to approximate the solution of the Newton equation without the need for direct matrix inversion, thereby reducing the computational burden. Additionally, we integrate vector extrapolation techniques to further accelerate convergence, ensuring improved accuracy and performance, even when the damping factor $\alpha$ is not ideal for Newton’s method.

\noindent The structure of this work is as follows. Section \ref{sec1} introduces the key notations and definitions. Section \ref{sec2} presents the classical Newton method for solving the multilinear PageRank problem, discussing its convergence and challenges. Section \ref{sec3} introduces Krylov-based solvers, specifically GMRES, and their non-linear versions combined with Newton iterations. In section \ref{sec5}, we explain how Krylov solvers are enhanced with extrapolation methods, such as Minimal Polynomial Extrapolation (MPE), Reduced Rank Extrapolation (RRE), and Anderson acceleration, to speed up convergence.  Finally, Section \ref{sec5k} presents numerical experiments comparing the proposed methods with classical solvers, demonstrating their effectiveness for large-scale problems.

\section{Preliminaries}\label{sec1k}
To ensure clarity in the subsequent section, we briefly introduce some fundamental terminology.

Let $n$ be a positive integer, and denote by $\langle n \rangle$ the index set $\{1, 2, \dots, n\}$. A tensor $\mathcal{A}$ of order $m$ and dimensions $n_1 \times \cdots \times n_m$ over the real field $\mathbb{R}$ is defined as
\[
\mathcal{A} = (a_{i_1 \cdots i_m}), \quad \text{where } a_{i_1 \cdots i_m} \in \mathbb{R}, \text{ and } i_j \in \langle n_j \rangle \text{ for } j = 1, \dots, m.
\]
When all dimensions are equal, i.e., $n_1 = \cdots = n_m = n$, the tensor is referred to as an order-$m$, dimension-$n$ tensor. The set of all such tensors is denoted by $\mathbb{R}^{[m,n]}$. In particular, when $m = 2$, the notation $\mathbb{R}^{[2,n]}$ corresponds to the set of all $n \times n$ real matrices, and when $m = 1$, $\mathbb{R}^{[1,n]}$ simplifies to $\mathbb{R}^n$, the space of all $n$-dimensional real vectors. 

A vector \( z \in \mathbb{R}^n \) is said to be stochastic if its entries are non-negative and sum to one.
\begin{definition}
    Let \( z \in \mathbb{R}^n \) be a stochastic vector and the zero vector \( 0 \in \mathbb{R}^n \). The projection of \( z \) is given by  
\begin{equation}
    \operatorname{proj}(z) = \frac{z^{+}}{\|z^{+}\|_1},
\end{equation}
where  $ z^{+} = \max(z, 0)$ is the non-negative part of $z$, and with  \( \operatorname{proj}(z) \) is also a stochastic vector.
\end{definition}

\noindent Matrix theory provides powerful tools for studying stability and convergence in numerical methods. Two important classes of matrices in this context are Z-matrices and M-matrices.
\begin{definition}[\cite{berman1994nonnegative}] \label{M-matrix}
    A matrix \( A \) is called a Z-matrix if all its off-diagonal entries are non-positive. Formally, it can be expressed as
    \[
    A = s I - B,
    \]
    where \( B \) is a non-negative matrix and \( s > 0 \). Furthermore, \( A \) is an \( M \)-matrix if \( s \geq \rho(B) \), and a nonsingular \( M \)-matrix if \( s > \rho(B) \), where \( \rho(B) \) denotes the spectral radius of \( B \).
\end{definition}

\noindent The following theorem establishes a key equivalence property of nonsingular M-matrices, which will play a crucial role in the subsequent developments of this paper
\begin{theorem}[\cite{berman1994nonnegative, fiedler1962matrices}] \label{thm:Mmatrix}  
For a Z-matrix \( A \), it is a nonsingular  M-matrix if and only if its inverse \( A^{-1} \) has all entries non-negative, i.e., \( A^{-1} \geq 0 \).  
\end{theorem}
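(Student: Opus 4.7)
The plan is to prove the two directions separately, using a Neumann series for the forward implication and the Perron--Frobenius theorem for the reverse implication. In both cases I would start from the decomposition $A = sI - B$ with $B \geq 0$ and $s > 0$ provided by Definition \ref{M-matrix}.

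For the forward direction, I assume $A$ is a nonsingular M-matrix, so $s > \rho(B)$. Factoring gives $A = s(I - s^{-1}B)$ with $\rho(s^{-1}B) = \rho(B)/s < 1$, so the Neumann series
\[
(I - s^{-1}B)^{-1} = \sum_{k=0}^{\infty}(s^{-1}B)^{k}
\]
converges. Each term is entrywise non-negative because $B \geq 0$, hence $A^{-1} = s^{-1}\sum_{k=0}^{\infty}(s^{-1}B)^{k} \geq 0$.

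For the reverse direction, I assume $A$ is a Z-matrix with $A^{-1} \geq 0$; in particular $A$ is nonsingular. By the Perron--Frobenius theorem, the non-negative matrix $B$ in the decomposition $A = sI - B$ admits a non-zero eigenvector $v \geq 0$ with $Bv = \rho(B)\, v$, whence $Av = (s - \rho(B))\, v$. Multiplying on the left by $A^{-1}$ yields $v = (s - \rho(B))\, A^{-1}v$. Choosing a coordinate $i$ with $v_i > 0$ and reading this equation componentwise forces $s - \rho(B) > 0$, since $v_i$ is strictly positive while $(A^{-1}v)_i$ is non-negative. This is exactly the condition for $A$ to be a nonsingular M-matrix.

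The part I expect to require the most care is the final sign argument in the reverse direction. The Perron eigenvector $v$ need not be strictly positive, so one cannot simply ``divide by $v$'' to extract $s - \rho(B) > 0$ from $v = (s - \rho(B))A^{-1}v$; working at a single coordinate $i$ with $v_i > 0$, and handling the sub-case $(A^{-1}v)_i = 0$ (which would force $v_i = 0$, a contradiction) is the key step. One also needs to rule out $s = \rho(B)$, which follows immediately because that equality combined with $Av = (s - \rho(B))v = 0$ for a non-zero $v$ would contradict the assumed invertibility of $A$.
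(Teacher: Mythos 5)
Your proposal is correct, but note that the paper itself offers no proof of this statement: it is quoted verbatim as a classical result with citations to Berman--Plemmons and Fiedler--Pt\'ak, so there is no in-paper argument to compare against. Your two-sided argument --- the Neumann series $A^{-1}=s^{-1}\sum_{k\ge 0}(s^{-1}B)^{k}\ge 0$ when $s>\rho(B)$, and the Perron--Frobenius eigenvector $v\ge 0$ of $B$ combined with $v=(s-\rho(B))A^{-1}v$ to force $s>\rho(B)$ in the converse --- is exactly the standard textbook proof given in those references. The coordinatewise sign argument you flag as delicate is handled correctly: at an index $i$ with $v_i>0$ the identity $v_i=(s-\rho(B))(A^{-1}v)_i$ rules out $(A^{-1}v)_i=0$, excludes $s<\rho(B)$ by sign, and $s=\rho(B)$ is excluded by the assumed invertibility of $A$. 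One small point worth a sentence if you wanted to be fully rigorous: the condition $s>\rho(B)$ is independent of the (non-unique) splitting $A=sI-B$, since replacing $(s,B)$ by $(s+t,B+tI)$ shifts both $s$ and $\rho(B)$ by $t$; the paper's Definition~\ref{M-matrix} glosses over this as well, so it is not a gap specific to your write-up.
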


\section{Newton's Method for Multilinear PageRank}\label{sec2}
In this section, we present the Newton iteration for computing the multilinear PageRank problem. Furthermore, we discuss the sufficient condition for its convergence.

\noindent At the core of this approach is a Newton-like iteration scheme designed to solve the nonlinear system
\begin{equation}
    f(\mathbf{x})=\alpha R(\underbrace{\mathbf{x} \otimes \mathbf{x} \dots \otimes \mathbf{x}}_{m-1}) + (1-\alpha)v-\mathbf{x}=0,
\end{equation}
where $f$ is non-linear function defined as $f(\mathbf{x})=g(\mathbf{x})-\mathbf{x}$  as discussed in the introduction.  Our goal is to iteratively find the solution to this system, which corresponds to the multilinear PageRank vector $\mathbf{x} \in \Omega$. Given an initial guess $\mathbf{x}_0$, the next iterate is computed by:
\begin{equation}
    \mathbf{x}_{k+1} = \mathbf{x}_k - J_{f}^{-1}(\mathbf{x}_k) f(\mathbf{x}_k),
\end{equation}
where $J_{f}(\mathbf{x}_k)$ is the Jacobian matrix of $f(\mathbf{x})$ evaluated at the current iterate $\mathbf{x}_k$, and $J_{f}^{-1}(\mathbf{x}_k)$ is its inverse. The iteration continues until the residual $\| f(\mathbf{x}_k) \|$ is sufficiently small, indicating convergence to the solution. Through this process, the non-singularity of the Jacobian matrix becomes crucial for the convergence of the method. If the Jacobian is singular or nearly singular at any point, the method may fail to converge or produce inaccurate solutions. In the following, we present the necessary conditions under which the Jacobian matrix remains non-singular, thus ensuring the convergence of the Newton method.
\begin{proposition}\label{non-singular}
     Let $\PP \in \RR^{[m,n]}$ be a transition probability tensor and $\mathbf{x}$ be a non-negative vector such that $e^{T} \mathbf{x} \leq 1$. Let $R$ be the flatting of $\PP$ along the first index. Then the  following matrix 
  \begin{equation}\label{Jac}
      -J\left(\mathbf{x}\right)=I-\alpha {R}({I} \otimes \mathbf{x} \otimes \cdots \otimes \mathbf{x}+\mathbf{x} \otimes I \otimes\mathbf{x} \otimes \cdots \otimes \mathbf{x}+\dots +\mathbf{x} \otimes \cdots \otimes \mathbf{x} \otimes I)
  \end{equation}
  is a non-singular matrix when $ \alpha  
     <  \dfrac{1}{(m-1)}$.
\end{proposition}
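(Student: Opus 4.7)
The plan is to show that $-J(\mathbf{x})$ fits into the nonsingular M-matrix framework of Theorem \ref{thm:Mmatrix}. Writing $-J(\mathbf{x}) = I - B$ with
\[
B = \alpha R\bigl(I \otimes \mathbf{x} \otimes \cdots \otimes \mathbf{x} + \mathbf{x} \otimes I \otimes \mathbf{x} \otimes \cdots \otimes \mathbf{x} + \cdots + \mathbf{x} \otimes \cdots \otimes \mathbf{x} \otimes I\bigr),
\]
the non-negativity of $R$ (which inherits from $\PP$) and of $\mathbf{x}$ immediately yields $B \geq 0$, so $-J(\mathbf{x})$ is a Z-matrix of the form $sI - B$ with $s = 1$. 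By Definition \ref{M-matrix}, it will be a nonsingular M-matrix (and therefore invertible) as soon as we can show $\rho(B) < 1$.

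The crux is thus the estimation of $\rho(B)$, and the natural tool is the column-sum norm $\|\cdot\|_1$, since the column-stochasticity of $\PP$ translates into the identity $e^{T}R = e^{T}$, where on the left $e$ is the all-ones vector in $\RR^{n}$ and on the right it is the all-ones vector in $\RR^{n^{m-1}}$. Writing $e_{n^{m-1}}^{T} = e^{T} \otimes e^{T} \otimes \cdots \otimes e^{T}$ ($m-1$ factors) and applying the mixed-product rule $(A \otimes B)(C \otimes D) = AC \otimes BD$ to each summand, I would compute, for instance,
\[
e^{T}R\,(I \otimes \mathbf{x} \otimes \cdots \otimes \mathbf{x}) = (e^{T}I) \otimes (e^{T}\mathbf{x}) \otimes \cdots \otimes (e^{T}\mathbf{x}) = (e^{T}\mathbf{x})^{m-2}\, e^{T},
\]
and the analogous identity for the other $m-2$ summands. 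Summing the $m-1$ contributions gives $e^{T}B = \alpha(m-1)(e^{T}\mathbf{x})^{m-2}\, e^{T}$.

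Under the hypothesis $e^{T}\mathbf{x} \leq 1$ this yields $e^{T}B \leq \alpha(m-1)\,e^{T}$ entrywise, that is, every column sum of the non-negative matrix $B$ is at most $\alpha(m-1)$. Hence $\rho(B) \leq \|B\|_1 \leq \alpha(m-1) < 1$ whenever $\alpha < 1/(m-1)$. By Theorem \ref{thm:Mmatrix} the Z-matrix $-J(\mathbf{x}) = I - B$ is then a nonsingular M-matrix, with $(-J(\mathbf{x}))^{-1} \geq 0$, and in particular it is non-singular.

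The only delicate point is the Kronecker bookkeeping: one must keep track of which $e$ lives in which space, check that the mixed-product identity is applicable (the dimensions $n$ and $1$ match in each factor), and treat the scalar $e^{T}\mathbf{x}$ carefully when it appears in a Kronecker product of vectors. Once that is done, the argument is essentially a one-line column-sum bound, with the case $m=2$ requiring the convention $(e^{T}\mathbf{x})^{0} = 1$ so that the estimate degenerates correctly.
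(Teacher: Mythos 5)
Your proposal is correct and follows essentially the same route as the paper: write $-J(\mathbf{x}) = I - B$ with $B \geq 0$, bound $\rho(B)$ by the column-sum norm to get $\rho(B) \leq \alpha(m-1)(e^{T}\mathbf{x})^{m-2} \leq \alpha(m-1) < 1$, and conclude via the Z-matrix/nonsingular M-matrix characterization. The only (harmless) difference is that you compute the column sums of $B$ exactly via $e^{T}R = e^{T}$ and the mixed-product rule, whereas the paper reaches the same bound through submultiplicativity of $\|\cdot\|_1$ together with $\|R\|_1 = 1$ and the Kronecker-product norm identity.
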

\begin{proof}
The equation (\ref{Jac}), can be written as
    \begin{equation}
        -J\left(\mathbf{x}\right)= I-B,
    \end{equation}
    where $B= \alpha {R}({I} \otimes \mathbf{x} \otimes \cdots \otimes \mathbf{x}+\mathbf{x} \otimes I \otimes\mathbf{x} \otimes \cdots \otimes \mathbf{x}+\dots +\mathbf{x} \otimes \cdots \otimes \mathbf{x} \otimes I)$. This structure suggests that  the matrix $-J(\mathbf{x})$ might be a Z-matrix (see Definition \ref{M-matrix} with $s=1$) because $B$ is a non-negative matrix since R is a column stochastic matrix (since R is a
stochastic tensor), and $\mathbf{x}$  is a non-negative vector. However, to prove that $-J(x)$ is non-singular, we need to show that $\rho(B) <1$ (see Definition \ref{M-matrix}). To do this, we use the spectral radius inequality
    $$\rho(B)\leq  \Vert B \Vert_{1}.$$
    Therefore, we have 
    \begin{equation}\label{eq77}
       \rho(B) \leq \alpha  \Vert R \Vert_{1} \Big(\Vert {I} \otimes \mathbf{x} \otimes \cdots \otimes \mathbf{x} \Vert_{1} + \Vert \mathbf{x} \otimes I \otimes\mathbf{x} \otimes \cdots \otimes \mathbf{x} \Vert_{1}+\dots + \Vert \mathbf{x} \otimes \cdots \otimes \mathbf{x} \otimes I \Vert_{1} \Big). 
    \end{equation}
    Next, using the property of the norm of a Kronecker product,
$\|\mathbf{a} \otimes \mathbf{b}\| = \|\mathbf{a}\| \|\mathbf{b}\|$,  and the fact that $ \Vert I \Vert_{1}=1$, we get
    $$\Vert {I} \otimes \mathbf{x} \otimes \cdots \otimes \mathbf{x} \Vert_{1}=\Vert \mathbf{x} \otimes I \otimes\mathbf{x} \otimes \cdots \otimes \mathbf{x} \Vert_{1}=\dots = \Vert \mathbf{x} \otimes \cdots \otimes \mathbf{x} \otimes I \Vert_{1}= \Vert \mathbf{x} \Vert_{1}^{m-2}=(e^{T} \mathbf{x})^{m-2}.$$
By applying the previous equation, along with the property that $\Vert R \Vert_{1}=1 $ and $e^{T} \mathbf{x} \leq 1$, we can express equation (\ref{eq77}) as
    $$\rho(B) \leq \alpha (m-1) (e^{T} \mathbf{x})^{m-2} \leq \alpha (m-1). $$
    Since $\alpha  < \frac{1}{m-1}$, then $\rho(B)<1$ which follow that $-J\left(\mathbf{x}\right)$ is non-singular.
\end{proof}

\subsection{Convergence Analysis of the Newton Iteration}
We now study the convergence of the Newton method applied to the multilinear PageRank problem. A first convergence result was established in \cite{gleich2015multilinear} for tensors of order three. Here, we extend this result to tensors of any order \(m\) and provide a simpler and more general proof. The convergence is stated in the following theorem.

\begin{theorem} \label{newconv}
  Let \( \alpha < \frac{1}{m - 1} \), and consider the initial vector \( \mathbf{x}_0 = 0 \). Then, the Newton iteration
  \begin{equation} \label{newiter}
    \mathbf{x}_{k+1} = \mathbf{x}_k - J_{f}(\mathbf{x}_k)^{-1} f(\mathbf{x}_k), \quad k = 0, 1, 2, \dots
  \end{equation}
  converges to the unique solution \( \mathbf{x} \) of the multilinear PageRank equation.
\end{theorem}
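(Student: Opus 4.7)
The plan is to establish convergence via a monotone-bounded argument rooted in the M-matrix structure of $M_{k}:=-J_{f}(\mathbf{x}_{k})$. Let $\mathbf{x}^{*}$ denote the unique stochastic solution of (\ref{mpr}), whose existence and uniqueness for $\alpha<1/(m-1)$ is classical \cite{gleich2015multilinear}. I would try to prove by induction on $k$ that
\begin{equation*}
0=\mathbf{x}_{0}\leq \mathbf{x}_{1}\leq \cdots \leq \mathbf{x}_{k}\leq \mathbf{x}^{*}\quad \text{and}\quad f(\mathbf{x}_{k})\geq 0,
\end{equation*}
and then pass to the limit in the Newton recursion $\mathbf{x}_{k+1}-\mathbf{x}_{k}=M_{k}^{-1}f(\mathbf{x}_{k})$. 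The two structural tools are Proposition~\ref{non-singular} together with Theorem~\ref{thm:Mmatrix}, which yield $M_{k}^{-1}\geq 0$ as soon as $e^{T}\mathbf{x}_{k}\leq 1$, and the componentwise non-negativity of the higher-order derivatives of the polynomial $g(\mathbf{x})=R\mathbf{x}^{\otimes(m-1)}$.

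The engine of the induction is the exact Taylor identity (exact because $f$ is a polynomial of degree $m-1$)
\begin{equation*}
f(\mathbf{y})=f(\mathbf{x}_{k})+J_{f}(\mathbf{x}_{k})(\mathbf{y}-\mathbf{x}_{k})+\alpha\sum_{j=2}^{m-1}\frac{1}{j!}g^{(j)}(\mathbf{x}_{k})(\mathbf{y}-\mathbf{x}_{k})^{\otimes j},
\end{equation*}
in which every term $g^{(j)}(\mathbf{x}_{k})(\mathbf{y}-\mathbf{x}_{k})^{\otimes j}$ reduces to a sum of tensor contractions of $R\geq 0$ against copies of $\mathbf{x}_{k}\geq 0$ and of the direction $\mathbf{y}-\mathbf{x}_{k}$, hence is entrywise non-negative as soon as $\mathbf{y}\geq \mathbf{x}_{k}$. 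Setting $\mathbf{y}=\mathbf{x}^{*}$ and using $f(\mathbf{x}^{*})=0$ gives $f(\mathbf{x}_{k})=M_{k}(\mathbf{x}^{*}-\mathbf{x}_{k})-\alpha\Phi_{k}$ with $\Phi_{k}\geq 0$; multiplying by $M_{k}^{-1}\geq 0$ then produces $\mathbf{x}^{*}-\mathbf{x}_{k+1}=\alpha M_{k}^{-1}\Phi_{k}\geq 0$, proving $\mathbf{x}_{k+1}\leq \mathbf{x}^{*}$. Next, setting $\mathbf{y}=\mathbf{x}_{k+1}$ and combining with the Newton equation $f(\mathbf{x}_{k})+J_{f}(\mathbf{x}_{k})(\mathbf{x}_{k+1}-\mathbf{x}_{k})=0$ gives $f(\mathbf{x}_{k+1})=\alpha\Psi_{k}\geq 0$, where the non-negativity of $\Psi_{k}$ uses the already-obtained monotonicity step $\mathbf{x}_{k+1}-\mathbf{x}_{k}=M_{k}^{-1}f(\mathbf{x}_{k})\geq 0$. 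The base case $k=0$ is immediate, since $\mathbf{x}_{0}=0$, $M_{0}=I$, and $f(0)=(1-\alpha)v\geq 0$.

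Once the induction is closed, monotonicity together with the upper bound $\mathbf{x}^{*}$ forces componentwise convergence $\mathbf{x}_{k}\to \mathbf{x}^{**}\leq \mathbf{x}^{*}$. Because $e^{T}\mathbf{x}^{**}\leq 1$, Proposition~\ref{non-singular} still certifies the invertibility of $M(\mathbf{x}^{**})$, so letting $k\to \infty$ in $\mathbf{x}_{k+1}-\mathbf{x}_{k}=M_{k}^{-1}f(\mathbf{x}_{k})$ yields $f(\mathbf{x}^{**})=0$, and uniqueness of the stochastic root delivers $\mathbf{x}^{**}=\mathbf{x}^{*}$. The main obstacle I anticipate is the careful bookkeeping for the Taylor remainders $\Phi_{k}$ and $\Psi_{k}$ in general order $m$: since $R$ is not symmetric, $g^{(j)}(\mathbf{x}_{k})$ decomposes into a sum over all $\binom{m-1}{j}$ placements of the differential direction among the $m-1$ Kronecker slots, and one must verify that each individual placement is already a non-negative tensor expression so that no sign cancellations can occur before $M_{k}^{-1}$ is applied. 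Aside from this combinatorial bookkeeping, the remainder of the argument is a standard application of M-matrix theory and monotone convergence.
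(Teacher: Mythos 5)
Your proposal follows essentially the same strategy as the paper's proof: an induction establishing $f(\mathbf{x}_k)\geq 0$ via the exact Taylor expansion of the degree-$(m-1)$ polynomial $f$ (linear term cancelled by the Newton equation, higher-order terms entrywise non-negative), non-negativity of $\delta_k$ from the M-matrix property of $-J_f(\mathbf{x}_k)$ via Proposition~\ref{non-singular} and Theorem~\ref{thm:Mmatrix}, and then monotone-bounded convergence. The one place you genuinely diverge is the boundedness step, and there your version is the sharper one: by evaluating the exact Taylor identity at $\mathbf{y}=\mathbf{x}^{*}$ you obtain the clean relation $\mathbf{x}^{*}-\mathbf{x}_{k+1}=\alpha M_k^{-1}\Phi_k\geq 0$, which simultaneously gives the upper bound and the feasibility condition $e^{T}\mathbf{x}_{k+1}\leq e^{T}\mathbf{x}^{*}=1$; the paper instead proves $e^{T}\mathbf{x}_{k+1}\leq 1$ by a separate scalar computation on $\mathbf{z}_k=e^{T}\mathbf{x}_k$ and then bounds the iterates by an unspecified non-negative vector $\mathbf{y}$ through the inequality $\mathbf{y}-\mathbf{x}_{k+1}=\mathbf{y}-\mathbf{x}_k+J(\mathbf{x}_k)^{-1}f(\mathbf{x}_k)\geq 0$, which does not follow from $\mathbf{y}\geq\mathbf{x}_k$ alone (the added term is $-\delta_k\leq 0$) and really only works when $\mathbf{y}$ is a root of $f$, exactly as in your argument. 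The price you pay is that you must assume existence and uniqueness of the stochastic solution up front (a legitimate citation to \cite{gleich2015multilinear} for $\alpha<1/(m-1)$); one small point worth making explicit at the end is that the monotone limit $\mathbf{x}^{**}$ is itself stochastic, which follows from the scalar equation $z=\alpha z^{m-1}+1-\alpha$ having $z=1$ as its only root in $[0,1]$ when $\alpha(m-1)<1$, so that the uniqueness you invoke actually applies.
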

\begin{proof}
We begin by proving that the Newton iteration defined in~\eqref{newiter} is well-defined and produces feasible iterates. Specifically, we show that
\begin{equation}\label{eq79}
   f(\mathbf{x}_k) \geq 0, \quad \text{for all } \mathbf{x}_k \geq 0 \text{ with } \mathbf{z}_k := e^T \mathbf{x}_k \leq 1,
\end{equation}
holds for all iterations \( k \). The proof proceeds by induction.

At iteration \( k = 0 \), we initialize with \( \mathbf{x}_0 = 0 \), which implies \( \mathbf{z}_0 = 0 \leq 1 \), and
\[
f(\mathbf{x}_0) = (1 - \alpha)\mathbf{v} \geq 0,
\]
so the condition is satisfied initially.

Assuming now that~\eqref{eq79} holds for some index \( k \), we aim to prove that it remains true at the next step. Let \( \delta_k = \mathbf{x}_{k+1} - \mathbf{x}_k \). Expanding \( f \) at \( \mathbf{x}_k \) via Taylor series gives:
\begin{equation}\label{taylor-detailed}
f(\mathbf{x}_k + \delta_k) = f(\mathbf{x}_k) + J_f(\mathbf{x}_k) \delta_k + \frac{1}{2} H_f(\mathbf{x}_k)(\delta_k \otimes \delta_k) + \cdots + \frac{1}{(m-1)!} f^{(m-1)}(\mathbf{x}_k)(\delta_k \otimes \dots \otimes \delta_k),
\end{equation}
where \( H_f(\mathbf{x}_k) \) is the Hessian of \( f \) at \( \mathbf{x}_k \). 
Since the Newton step satisfies \( J_f(\mathbf{x}_k)\delta_k = -f(\mathbf{x}_k) \), the linear term cancels and we get:
\[
f(\mathbf{x}_{k+1}) = \frac{1}{2} H_f(\mathbf{x}_k)(\delta_k \otimes \delta_k) + \cdots + \frac{1}{(m-1)!} f^{(m-1)}(\mathbf{x}_k)(\delta_k \otimes \dots \otimes \delta_k).
\]
We now verify that each term in this sum is non-negative. Consider first the term:
\begin{equation}\label{first-term}
\frac{1}{2} H_{f}(\mathbf{x}_{k}) (\delta_{k} \otimes \delta_{k}).
\end{equation}
For any \( m \geq 3 \), the Hessian of \( f \) is given by:
\begin{equation}\label{hessian-general}
H_{f}(\mathbf{x}) = \alpha R \left( I \otimes I \otimes \mathbf{x} \otimes \cdots \otimes \mathbf{x} + \cdots + \mathbf{x} \otimes I \otimes I \otimes \cdots \otimes \mathbf{x} + \cdots + \mathbf{x} \otimes \cdots \otimes I \otimes I \right),
\end{equation}
where each term is a Kronecker product involving two identity matrices and \( m - 3 \) entries of \( \mathbf{x} \). Since all entries of \( R \) are nonnegative and given that \( \mathbf{x}_k \geq 0 \), the entire Hessian is entrywise non-negative.

Moreover, from the hypothesis and Proposition~\ref{non-singular}, \( -J_f(\mathbf{x}_k) \) is a non-singular \( M \)-matrix, so its inverse is non-negative. Since \( f(\mathbf{x}_k) \geq 0 \), this implies \( \delta_k = -J_f(\mathbf{x}_k)^{-1} f(\mathbf{x}_k) \geq 0 \). Thus,
\[
\frac{1}{2} H_{f}(\mathbf{x}_{k}) (\delta_{k} \otimes \delta_{k}) \geq 0.
\]

This argument extends similarly to higher derivatives. For example, the third derivative has the form
\[
f^{(3)}(\mathbf{x}) = \alpha R \left( I \otimes I \otimes I \otimes \cdots \otimes \mathbf{x} + \cdots + \mathbf{x} \otimes I \otimes I \otimes I \otimes \cdots \otimes \mathbf{x} + \cdots + \mathbf{x} \otimes \cdots \otimes I \otimes I \otimes I \right),
\]
where the terms involve replacing positions of \( \mathbf{x} \) in the Kronecker product with identity matrices \( I \), analogous to the procedure in the Hessian calculation. For the same reason as in the previous case, we conclude that
\[
f^{(3)}(\mathbf{x}_k)(\delta_k \otimes \delta_k \otimes \delta_k) \geq 0.
\]

This reasoning holds for higher derivatives as well, up to the \( (m-1) \)-th derivative, which is given by:
\begin{equation}
f^{(m-1)}(\mathbf{x}) = \alpha R (I \otimes I \otimes I \otimes \cdots \otimes I + \cdots + I \otimes I \otimes I \otimes I \otimes \cdots \otimes I + \cdots + I \otimes \cdots \otimes I \otimes I \otimes I). \tag{3.12}
\end{equation}
For the simplest case where \( m = 3 \), we have
\begin{equation}
f^{(m-1)}(\mathbf{x})=H_{f}(\mathbf{x}) = \alpha (I \otimes I + I \otimes I) = 2\alpha (I \otimes I).
\end{equation}
In this case, the expression for \( f(\mathbf{x}_{k+1}) \) simplifies to:
\begin{equation}\label{simplecase}
f(\mathbf{x}_{k+1}) = \alpha R (\delta_k \otimes \delta_k),
\end{equation}
as shown in Gleich et al.~\cite{gleich2015multilinear}.   Therefore, for all \( m \geq 3 \), we conclude:
\[
f(\mathbf{x}_{k+1}) \geq 0, \quad \text{for all}\;\; \mathbf{x}_{k+1} \geq 0.
\]
\noindent It remains to prove that the next iterate remains within the feasible set. For that,  taking summations on both sides of the equation
$$ -J_{f}(\mathbf{x}_{k}) \delta_{k}=f(\mathbf{x}_{k}),$$
we arrive at the next result
$$ -e^{T} J_{f}(\mathbf{x}_{k})(\mathbf{x}_{k+1}-\mathbf{x}_{k}) = e^{T} f(\mathbf{x}_{k}),$$
which implies
$$ (1-\alpha(m-1) \mathbf{z}_{k}^{m-2})(\mathbf{z}_{k+1}-\mathbf{z}_{k})=\alpha  \mathbf{z}_{k}^{m-1}+(1-\alpha)-\mathbf{z}_{k}.$$
Therefore
\begin{align*}
  \mathbf{z}_{k+1}&=  \dfrac{\alpha \mathbf{z}_{k}^{m-1}+(1-\alpha)-\mathbf{z}_{k} }{1-\alpha (m-1) \mathbf{z}_{k}^{m-2}}+\mathbf{z}_{k}
  \\&=\dfrac{(1-(m-1))\alpha  \mathbf{z}_{k}^{m-1} + (1-\alpha)-\mathbf{z}_{k}}{1-\alpha (m-1) \mathbf{z}_{k}^{m-2}}.\\
\end{align*}
We aim to prove that \( \mathbf{z}_{k+1} \leq 1 \) if \( \mathbf{z}_{k} \leq 1 \). We start by  the denominator, since \( \mathbf{z}_{k} \leq 1 \), we have \( \mathbf{z}_{k}^{m-2} \leq 1 \), which implies:
$$
1 - \alpha (m-1) \mathbf{z}_{k}^{m-2} \geq 1 - \alpha (m-1).
$$
To ensure that the denominator remains positive, we require that \( 1 - \alpha (m-1) > 0 \), which simplifies in the condition $\alpha < \frac{1}{m-1}$. In the other hand, the numerator can be simplified as
$$(2 - m)\alpha \mathbf{z}_{k}^{m-1} + (1 - \alpha) - \mathbf{z}_{k}.$$
Since  $\mathbf{z}_{k} \leq 1$, we know that $ \mathbf{z}_{k}^{m-1} \leq 1$. Thus, we further bound this expression
$$
(2 - m)\alpha \mathbf{z}_{k}^{m-1}+(1 - \alpha) - \mathbf{z}_{k} \leq (2 - m)\alpha+(1 - \alpha) -1=(1-m) \alpha.
$$
 In the worst-case scenario where \( \mathbf{z}_{k} = 1 \), the numerator to be non-positive, we need \( (1 - m)\alpha \leq 0 \), which holds since \( m > 1 \) and \( \alpha \geq 0 \). Thus, under the condition \( \alpha < \frac{1}{m-1} \), the numerator is bounded, ensuring that \( \mathbf{z}_{k+1} \leq 1 \).
This completes all the inductive conditions for (\ref{eq79}). 

\noindent Now, we prove that the sequence of iterates $\{ \mathbf{x}_{k}\}_{k=0}^{\infty}$ converges. Firstly, we prove that $\{ \mathbf{x}_{k}\}$ is an increasing sequence. For that, we consider the equation $\delta_{k}=-J_{f}(\mathbf{x}_{k})^{-1} f(\mathbf{x}_{k})$.  Since the iteration $\mathbf{x}_{k}$
  is well-defined and $-J_{f}(\mathbf{x}_{k})$ is a non-singular M-matrix, its inverse $-J_{f}(\mathbf{x}_{k})^{-1}$  is non-negative (see  Theorem \ref{thm:Mmatrix}). This implies that  $\delta_{k} \geq 0$, and consequently $\mathbf{x}_{k+1} \geq \mathbf{x}_{k}$. Secondly, we will need to show that the sequence $\{ \mathbf{x}_{k}\}$  is bounded.  We proceed by induction, when $k=0$ it is clear we have $\mathbf{x}_{0}=0 \leq \mathbf{y}$, for any non-negative vector $\mathbf{y}$. We now assume $\mathbf{x}_{k} \leq \mathbf{y}$, and consider the result for $k+1$. We have
  \begin{equation}
      \mathbf{y}-\mathbf{x}_{k+1}=\mathbf{y}-\mathbf{x}_{k}+J(\mathbf{x}_{k})^{-1} f(\mathbf{x}_{k}) \geq 0,
  \end{equation}
 which implies that  $\mathbf{x}_{k+1} \leq \mathbf{y}$. Consequently, the sequences $\{ \mathbf{x}_{k}\}_{k=0}^{\infty}$ converge to the unique solution with $\mathbf{x} \geq 0$ and $e^{T} \mathbf{x} \leq 1$. 
\end{proof}

Having shown that the Newton iteration converges to the unique solution of the multilinear PageRank problem, we now proceed to determine the rate at which the iterates approach this solution. This is described in the following theorem.
\begin{theorem}
 Under the same condition, the Newton iteration defined in~\eqref{newiter} converges quadratically to the unique solution \( \mathbf{x} \) of the multilinear PageRank problem.
\end{theorem}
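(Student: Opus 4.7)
The plan is to derive a quadratic contraction inequality for the error $\mathbf{e}_k := \mathbf{x}^\star - \mathbf{x}_k$, where $\mathbf{x}^\star$ denotes the unique solution guaranteed by Theorem~\ref{newconv}. I would start from the Taylor expansion of $f$ around $\mathbf{x}_k$, evaluated at $\mathbf{x}^\star$. Because $f$ is a polynomial of degree $m-1$, the expansion terminates, and using $f(\mathbf{x}^\star)=0$ together with the Newton equation $J_f(\mathbf{x}_k)\delta_k = -f(\mathbf{x}_k)$, a direct manipulation gives
\begin{equation*}
\mathbf{e}_{k+1} \;=\; -\,J_f(\mathbf{x}_k)^{-1}\!\left[\tfrac{1}{2}H_f(\mathbf{x}_k)(\mathbf{e}_k\otimes \mathbf{e}_k) + \sum_{j=3}^{m-1}\tfrac{1}{j!}\,f^{(j)}(\mathbf{x}_k)(\mathbf{e}_k^{\otimes j})\right].
\end{equation*}
Taking the $1$-norm then yields a bound of the form $\|\mathbf{e}_{k+1}\|_1 \le C_k \|\mathbf{e}_k\|_1^2$ provided that $\|J_f(\mathbf{x}_k)^{-1}\|_1$, $\|H_f(\mathbf{x}_k)\|_1$ and the remaining $\|f^{(j)}(\mathbf{x}_k)\|_1$ are uniformly controlled, and that $\|\mathbf{e}_k\|_1$ is small enough for the cubic and higher terms to be dominated by the quadratic one.

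The key steps I would carry out in order are: (i) write out the Taylor identity above carefully using the explicit Kronecker-product formulas for $H_f, f^{(3)}, \ldots, f^{(m-1)}$ already derived in the proof of Theorem~\ref{newconv}; (ii) bound each higher derivative in the $1$-norm by noting that $\|R\|_1 = 1$, $\|I\|_1 = 1$ and $\|\mathbf{x}_k\|_1 = e^T\mathbf{x}_k \le 1$, so that $\|f^{(j)}(\mathbf{x}_k)\|_1 \le \alpha\binom{m-1}{j}$, giving a bound independent of $k$; (iii) bound $\|J_f(\mathbf{x}_k)^{-1}\|_1$ uniformly by invoking Proposition~\ref{non-singular}, which gives $\|B_k\|_1 \le \alpha(m-1)(e^T\mathbf{x}_k)^{m-2} \le \alpha(m-1) < 1$, so that the Neumann series yields
\begin{equation*}
\|J_f(\mathbf{x}_k)^{-1}\|_1 \;=\; \|(I-B_k)^{-1}\|_1 \;\le\; \frac{1}{1-\alpha(m-1)}.
\end{equation*}
Combining (ii) and (iii) furnishes a uniform constant $C$ with $\|\mathbf{e}_{k+1}\|_1 \le C(\|\mathbf{e}_k\|_1^2 + \|\mathbf{e}_k\|_1^3 + \cdots + \|\mathbf{e}_k\|_1^{m-1})$.

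The last step is to absorb the higher-order terms into the quadratic one: since Theorem~\ref{newconv} already establishes $\mathbf{x}_k \to \mathbf{x}^\star$, for every $\varepsilon>0$ there exists $k_0$ such that $\|\mathbf{e}_k\|_1 < \varepsilon$ for $k\ge k_0$, hence $\|\mathbf{e}_k\|_1^{j} \le \varepsilon^{j-2}\|\mathbf{e}_k\|_1^2$ for $j\ge 3$, and one concludes $\|\mathbf{e}_{k+1}\|_1 \le \tilde{C}\,\|\mathbf{e}_k\|_1^2$, which is quadratic convergence.

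The main obstacle I anticipate is the uniform bound on $\|J_f(\mathbf{x}_k)^{-1}\|_1$: without it, one only gets asymptotic quadratic convergence from continuity of the inverse at $\mathbf{x}^\star$, whereas Proposition~\ref{non-singular} and the stochastic structure of $R$ allow a clean global bound that works from the first iteration under the hypothesis $\alpha<1/(m-1)$. The rest of the argument is a routine matter of bookkeeping the Kronecker-product norms, which is straightforward once one exploits $\|\mathbf{a}\otimes\mathbf{b}\|_1 = \|\mathbf{a}\|_1\|\mathbf{b}\|_1$ as already done in the proof of Proposition~\ref{non-singular}.
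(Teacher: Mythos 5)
Your proposal is correct, but it takes a genuinely different route from the paper. The paper does not work with the error vector $\mathbf{e}_k=\mathbf{x}^\star-\mathbf{x}_k$ at all: it expands $f(\mathbf{x}_{k+1})$ around $\mathbf{x}_k$, uses the Newton equation to kill the linear term so that $f(\mathbf{x}_{k+1})=O(\|\delta_k\|^2)$, and then multiplies by $e^T$ to exploit the stochastic structure ($e^T R=e^T$), which collapses everything to the scalar recursion $e^Tf(\mathbf{x}_k)=(1-\alpha(m-1)\mathbf{z}_k^{m-2})(\mathbf{z}_{k+1}-\mathbf{z}_k)$ with $\mathbf{z}_k=e^T\mathbf{x}_k$; the conclusion is $|\mathbf{z}_{k+2}-\mathbf{z}_{k+1}|\le C|\mathbf{z}_{k+1}-\mathbf{z}_k|^2$, i.e.\ quadratic decay of successive \emph{step sizes} (which, because $\delta_k\ge 0$, coincide with $\|\mathbf{x}_{k+1}-\mathbf{x}_k\|_1$). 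Your argument is the classical Kantorovich-style one: expand $f(\mathbf{x}^\star)$ around $\mathbf{x}_k$, use $f(\mathbf{x}^\star)=0$ and the Newton equation to get the exact identity $\mathbf{e}_{k+1}=-J_f(\mathbf{x}_k)^{-1}\bigl[\tfrac12 H_f(\mathbf{x}_k)(\mathbf{e}_k\otimes\mathbf{e}_k)+\cdots\bigr]$, and bound everything uniformly via $\|R\|_1=1$, $e^T\mathbf{x}_k\le 1$, and the Neumann-series bound $\|(I-B_k)^{-1}\|_1\le 1/(1-\alpha(m-1))$. What your approach buys is a strictly stronger and more transparent conclusion, namely quadratic contraction of the true error $\|\mathbf{x}^\star-\mathbf{x}_k\|_1$ with an explicit, iteration-independent constant, rather than a statement about consecutive differences of the scalars $\mathbf{z}_k$; what the paper's approach buys is brevity, since the reduction to scalars avoids all of the Kronecker-product bookkeeping. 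Two minor points to tidy in your write-up: the count of terms in $f^{(j)}$ gives $\|f^{(j)}(\mathbf{x}_k)\|_1\le \alpha\, j!\binom{m-1}{j}$ rather than $\alpha\binom{m-1}{j}$ (harmless, since you divide by $j!$ in the Taylor sum), and you should note explicitly that the monotonicity and boundedness established in Theorem~\ref{newconv} guarantee $e^T\mathbf{x}_k\le 1$ for every $k$, which is what makes the derivative bounds and the Jacobian-inverse bound uniform from the very first iteration.
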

\begin{proof}
To analyze the convergence rate of the Newton method, we consider the Taylor expansion of $f(\mathbf{x}_{k+1})$ around $\mathbf{x}_{k}$ as 
$$f(\mathbf{x}_{k+1})=f(x_{k})+J_{f}(\mathbf{x}_{k})(\mathbf{x}_{k+1}-\mathbf{x}_{k})+O(\mathbf{x}_{k+1}-\mathbf{x}_{k}),$$
where the remainder term satisfies
\begin{equation}\label{remainder}
  \Vert O(\mathbf{x}_{k+1}-\mathbf{x}_{k})  \Vert \leq M \Vert \mathbf{x}_{k+1}-\mathbf{x}_{k} \Vert^{2},  
\end{equation}
with $M>0$, using the Newton iteration, we obtain
$$ J_{f}(\mathbf{x}_{k})(\mathbf{x}_{k+1}-\mathbf{x}_{k})+f(\mathbf{x}_{k})=0,$$
which implies
\begin{equation}
    f(\mathbf{x}_{k+1})=O(\mathbf{x}_{k+1}-\mathbf{x}_{k}).
\end{equation}
Next, upon multiplying by $e^{T}$, leads to
\begin{equation}\label{eq715}
   e^{T} f(\mathbf{x}_{k+1})=O(\mathbf{z}_{k+1}-\mathbf{z}_{k}).
\end{equation}
On the other hand, the equation
$$ -J_{f}(\mathbf{x}_{k}) \delta_{k}=f(\mathbf{x}_{k}),$$
also holds, whereby multiplying  by $e^{T}$, we find
\begin{equation}
    e^{T} f(\mathbf{x}_{k})=-e^{T} J_{f}(\mathbf{x}_{k})(\mathbf{x}_{k+1}-\mathbf{x}_{k})=(1-\alpha(m-1) \mathbf{z}_{k}^{m-2})(\mathbf{z}_{k+1}-\mathbf{z}_{k}). 
\end{equation}
Using a similar argument for $k+1$, we obtain
\begin{equation}\label{eq717}
    e^{T} f(\mathbf{x}_{k+1})=-e^{T} J_{f}(\mathbf{x}_{k+1})(\mathbf{x}_{k+2}-\mathbf{x}_{k+1})=(1-\alpha(m-1) \mathbf{z}_{k+1}^{m-2})(\mathbf{z}_{k+2}-\mathbf{z}_{k+1}). 
\end{equation}
Substituting equation (\ref{eq717}) into (\ref{eq715}) gives
$$
  (1-\alpha(m-1) \mathbf{z}_{k+1}^{m-2})(\mathbf{z}_{k+2}-\mathbf{z}_{k+1})=O(\mathbf{z}_{k+1}-\mathbf{z}_{k}).
$$
Thus
$$
    (\mathbf{z}_{k+2}-\mathbf{z}_{k+1})=\dfrac{1}{(1-\alpha(m-1) \mathbf{z}_{k+1}^{m-2})}O(\mathbf{z}_{k+1}-\mathbf{z}_{k}).
$$
Since the sequence converges and $\mathbf{z}_{k} \leq 1 $ for all sufficiently large $k $, we have
\begin{equation}
 (\mathbf{z}_{k+2}-\mathbf{z}_{k+1}) \leq \dfrac{1}{1-\alpha(m-1)}O(\mathbf{z}_{k+1}-\mathbf{z}_{k}).
\end{equation}
 By leveraging this and (\ref{remainder}) we find that there exists a constant $C>0$  such that
 $$ \Vert \mathbf{z}_{k+2}-\mathbf{z}_{k+1} \Vert \leq C \Vert \mathbf{z}_{k+1}-\mathbf{z}_{k} \Vert^{2}, $$
where $C=\dfrac{M}{1-\alpha(m-1)}>0$.  This is positive because, $M>0$ and $1-\alpha(m-1)>0$, as $\alpha <\dfrac{1}{m-1}$.
\end{proof}

As noted in Theorem~\ref{newconv}, the Newton iteration starting with $\mathbf{x}_{0} = 0$ behaves differently depending on the value of $\alpha$. When $\alpha < \frac{1}{m-1}$, it gradually grows the solution until it becomes stochastic and converges to the correct PageRank vector. However, for problems when $\alpha >\frac{1}{m-1}$, this iteration often converges into a non-stochastic solution. In order to tackle this issue and to make the Newton algorithm practical for such cases, Gleich et al. \cite{gleich2015multilinear} placed a normalization step after each Newton iteration that ensures a stochastic solution. The following algorithm includes this modification, which ensures convergence to a stochastic solution during Newton iterations for multilinear PageRank computations.
\begin{algorithm}[H]
    \caption{ Newton Method for Multilinear PageRank }
    \label{alg:inexact_newton_projection}
    \begin{algo}
    \INPUT{Given a transition probability tensor $\PP$, $\alpha \in [0,1)$, maximum number of iterations $K_{max}$, and a termination tolerance $\epsilon$}
       \OUTPUT{The multilinear PageRank vector $\mathbf{x}$.}
        \STATE Initialize $k = 0$ and set $\mathbf{x}_0 = v$ as the initial guess,
      \STATE  \REPEAT
            \STATE Solve $J_{f}\left(\mathbf{x}_{k}\right) \delta_k = -f\left(\mathbf{x}_{k}\right)$,
            \STATE Update $\mathbf{x}_{k+1} = \text{proj}(\mathbf{x}_{k} + \delta_k)$,
\STATE $k=k+1$,
           \STATE \Until{$\|f(\mathbf{x}_{k})\|_{1} < \epsilon$}.
    \end{algo}
\end{algorithm}
While the Newton method offers superior convergence behavior, as we have seen that the sequence generated by Newton's iteration will converge quadratically to the solution of the nonlinear equation $f(\mathbf{x}) = 0$, there is a disadvantage with the Newton method. At every Newton iteration step, we need to solve the linear system $J_{f}\left(\mathbf{x}_{k}\right) \delta_k = -f\left(\mathbf{x}_{k}\right)$, which can be computationally expensive, especially for high dimensions. To address this, we propose using Krylov subspace methods to solve the linear system approximately. This approach, which combines Newton's method with Krylov subspace methods, is known as nonlinear Krylov subspace projection methods \cite{brown1990hybrid}.  In the next section, we will present how this method can be specifically applied to tackle the multilinear PageRank problem.

\section{Non-linear Krylov method for multilinear PageRank}\label{sec3}
 In this section, we provide an overview of Krylov subspace methods, with a focus on the GMRES algorithm, before discussing its nonlinear extensions and finite-difference variants in the context of multilinear PageRank.
\subsection{Krylov subspace methods}
Solving large linear systems of the form \( A x = b \), where \( A \in \mathbb{C}^{n \times n} \) and \( b \in \mathbb{C}^{n} \), is fundamental in scientific computing, arising in various applications. However, their solution can be computationally expensive, especially in time-dependent, nonlinear, or inverse problems requiring multiple solves. Efficient solvers are therefore essential for improving numerical performance. Direct methods provide exact solutions via matrix factorization \cite{duff2017direct} but become impractical for large problems due to high storage and computational costs. Iterative methods offer a more efficient alternative, computing approximate solutions through matrix-vector multiplications without explicit factorizations or full matrix storage \cite{bellavia2013matrix, knyazev2001toward}.

Among iterative methods, Krylov subspace methods are particularly effective for solving large linear systems. These methods iteratively build an approximate solution within a subspace spanned by successive matrix-vector products, avoiding the need to explicitly compute $A^{-1}$. These methods are named after Aleksei Nikolaevich Krylov, who first utilized them to study mechanical system oscillations in \cite{krylov1931numerical}, where the subspace takes the following form
\begin{equation}
    \mathcal{K}_{p}(A, \nu):= \text{span}\{\nu , A \nu, A^{2}\nu, \dots, A^{p-1} \nu \},\quad \text{with}\;\; \nu \in \mathbb{C}^{n}.
\end{equation}
Krylov subspace methods for linear systems start from an initial guess $\mathbf{x}_0$, compute the initial residual ${r}_0 = {b} - A \mathbf{x}_0$, and then typically select iterates $\mathbf{x}_1, \mathbf{x}_2, \dots$ such that
\begin{equation}
\mathbf{x}_p - \mathbf{x}_0 \in \mathcal{K}_p(A, \mathbf{r}_0),
\end{equation}
thereby projecting the original problem to one of much smaller dimension. The conjugate gradient (CG) method is the first Krylov subspace method for linear systems, introduced by Hestenes and Stiefel \cite{hestenes1952methods}, with related work by Lanczos \cite{lanczos1950iteration, lanczos1952solution}. Initially viewed as a direct method, CG was later recognized as an iterative method after Reid's observation \cite{reid1971method}. This sparked renewed interest in Krylov subspace methods, leading to the development of widely used approaches such as the Generalized Minimum Residual Method (GMRES) \cite{saad1986gmres} and the Full Orthogonalization Method (FOM) \cite{saad1981krylov}, along with other methods like the Least Squares QR Method (LSQR) \cite{paige1982lsqr} and the Biconjugate Gradient Method (BiCG) \cite{fletcher2006conjugate}. In the present work, we focus exclusively on the GMRES method, which remains one of the most effective tools for solving linear systems. This method calculates a solution $\mathbf{x}_{p} \in \mathbf{x}_{0} + \mathcal{K}_{p}$ by minimizing the residual norm over all vectors in $\mathbf{x}_{0} + \mathcal{K}_{p}$. Specifically, at the $p$-th step, GMRES determines $\mathbf{x}_{p}$ to minimize $\Vert b - A \mathbf{x}_{p} \Vert_{2}$, for all $\mathbf{x}_{p} \in \mathbf{x}_{0} + \mathcal{K}_{p}$. The standard implementation of GMRES relies on the Arnoldi process, as outlined in Algorithm \ref{alg:A15}. This process uses a modified Gram-Schmidt procedure to construct an orthonormal basis within the Krylov subspace.
\begin{algorithm}[H]
\caption{ The Arnoldi process}\label{alg:A15}
\begin{algo}
\INPUT Given a matrix $A$, vector $b$, a positive initial vector $\mathbf{x}_{0}$, and a positive parameter $p$.  
\OUTPUT  An orthogonal basis ${V}_{p}=[v_{1},v_{2},\dots,v_{p}]$ of $\mathcal{K}_{p}$.
\STATE Compute $r_{0}=b-A \mathbf{x}_{0}$, $\beta=\Vert r_{0} \Vert $ and $v_{1}=r_{0}/ \beta$.
\FOR{j=1 \dots p}
 \STATE  $w=A v_{j}$;
 \FOR{i=1 \dots j}
 \STATE $h_{ij}=<w,v_{i}>$; $w=w-v_{i} h_{ij}$;
 \ENDFOR
 \STATE $h_{j+1,j}=\Vert w \Vert$; $v_{j+1}=w/h_{j+1,j}$;
 \ENDFOR 
\end{algo}
\end{algorithm}
Algorithm \ref{alg:A15} generates a set of $p + 1$ orthonormal vectors, denoted by $v_{1}, v_{2}, \dots, v_{p+1}$, and computes the scalars $h_{i,j}$. The first $p$ vectors $(v_1, v_2, \dots, v_p)$ form an orthonormal basis for the Krylov subspace $\mathcal{K}_{p}(A, b)$, which define the matrices $V_{j} = [v_{1}, v_{2}, \dots, v_{j}]$, for $j \in \{p, p+1\}$.  While, from the scalars, we construct the upper Hessenberg matrix $H_{p+1,p} = [h_{i,j}] \in \mathbb{R}^{(p+1)\times p}$.
From these matrices, the recursion formulas for the Arnoldi process can be derived and expressed as a partial Arnoldi decomposition
$$A V_{p}=V_{p+1} H_{p+1,p}.$$
The last relation is applied to compute the GMRES iterate $\mathbf{x}_{p}$ as follows
\begin{equation}\label{eq719}
  \min_{\mathbf{x}_{p}\in \mathcal{K}_{p} } \Vert b- A \mathbf{x}_{p} \Vert=\min_{\mathbf{y} \in \RR^{p}}\Vert b-A V_{p} \mathbf{y}  \Vert=\min_{\mathbf{y} \in \RR^{p}} \Vert  \beta e_{1}- H_{p+1,p} \mathbf{y} \Vert,  
\end{equation}
where  $e_{1}$ denotes the first unit vector of $\RR^{p+1}$. The small minimization problem on the right-hand side of the equation (\ref{eq719}) can be efficiently solved using QR factorization of the upper Hessenberg matrix  $H_{p+1,p}$. This reference \cite{saad2003iterative} provides further details on this technique.

This method has been extensively studied in the literature, with early foundations laid in works such as \cite{brown1990hybrid}. In this paper, we adapt it to efficiently solve the multilinear PageRank problem, offering a novel approach to address its challenges.
\subsection{Nonlinear Krylov algorithms}
We are interested in using a Newton-like iteration scheme to solve the nonlinear system
\begin{equation}
    f(\mathbf{x})=0.
\end{equation}
As discussed previously, at each iteration we must obtain an approximate solution of the following linear system derived from applying the Newton-like method
\begin{equation}\label{eq721}
        J\delta=-f,
    \end{equation}
    where $f$ represents the non-linear function and $J$ its Jacobian matrix, both evaluated at the current iterate. To find this approximate solution, we introduce a correction term $z$.  If $\delta_{0}$ is an initial guess for the true solution of the linear system (\ref{eq721}), then letting $\delta=\delta_{0} + z$, we have the equivalent equation
    \begin{equation}
    Jz=r_{0},
\end{equation}
where $r_{0}=-f-J\delta_{0}$ is the initial residual. In this context, the Krylov subspace is defined as
$$\mathcal{K}_{p} \equiv span\{ r_{0},Jr_{0},\dots, J^{p-1} r_{0} \}.$$ We use GMRES to find an approximate solution, denoted by $\delta_{p}$, and can be expressed as
\begin{equation}
        \delta_{p}=\delta_{0}+z_{p}, \quad \text{with} \; z_{p} \in \mathcal{K}_{p},
    \end{equation}
    such that
    \begin{equation}\label{eq724}
        \min_{\delta_{p}\in \delta_{0}+ \mathcal{K}_{p} } \Vert f+ J \delta_{p}  \Vert_{2}=\min_{\mathbf{z} \in \mathcal{K}_{p} } \Vert r_{0}-J z \Vert_{2}.
    \end{equation}
The following algorithm outlines the main steps of a nonlinear variant of the GMRES algorithm used to compute a multilinear PageRank vector. At each iteration, it generates an orthonormal system of vectors $v_{i}$, with $i=1, 2,\dots, p$  belonging to the subspace $\mathcal{K}_{p}$. Then, the construction of the vector $\delta_{p}$ that satisfies (\ref{eq724}).

\begin{algorithm}[H]
\caption{ Newton-GMRES Method for Multilinear PageRank}\label{alg:A16}
\begin{algo}
\INPUT  Given a transition probability tensor $\PP$, teleportation vector $v$, damping factor \( \alpha \in [0,1) \), maximum number of iterations $K_{max}$, and convergence tolerance $\epsilon$.
\OUTPUT  The multilinear PageRank vector $\mathbf{x}$.
\STATE Initialize $k = 0$, set $\mathbf{x}_0 = v$, and choose an inner GMRES tolerance $\epsilon_{0}$.
\STATE \REPEAT
 \STATE For an initial guess $\delta_{0}$, form $r_{0}=-f-J\delta_{0}$, where $f=f(\mathbf{x}_{k})$ and $J=J_{f}(\mathbf{x}_{k})$. Then, compute $\beta=\|r_{0}\|_{2}$ and $v_{1}=\dfrac{r_{0}}{\beta}$.
 \FOR {$j = 1, \dots, p$}
            \STATE  Form $Jv_j$ and orthogonalize  it against, $v_1, \ldots, v_j$ using the Arnoldi process,
            \STATE compute the residual norm $\rho_j = \|f + J\delta_{j}\|_{2}$ of the solution $\delta_{j}$,
            \STATE  if $\rho_j \leq \epsilon_k$, set $p = j$ and go to (11). 
        \ENDFOR
\STATE Define $\bar{H}_{p} \in \RR^{(p+1)\times p}$, $V_{p} \in \RR^{n \times p}$, find $z_p$ that minimizes $\| \beta e_{1}-\Bar{H}_{p}z \|$ over all vector $z$.
\STATE Compute $\delta_{p} = \delta_{0} + V_{p}z_{p}$.
\STATE  Set $\delta_{k}=\delta_{p}$ and update $\mathbf{x}_{k+1} = \text{proj}(\mathbf{x}_{k} + \delta_k)$.
\STATE $k=k+1$.
\STATE \Until{$\|f(\mathbf{x}_{k})\|_{1} < \epsilon$}.
\end{algo}
\end{algorithm}
One of the main advantages of the Newton-GMRES method is that it does not require the explicit construction of the Jacobian matrix \( J \); instead, it only relies on the action of \( J \) on a vector \( v \). This is particularly useful in large-scale settings, where computing and storing the full Jacobian matrix would be computationally expensive or infeasible. Instead, the matrix-vector product \( J_f(\mathbf{x}) v \) can be efficiently approximated using a finite difference formula:
\begin{equation}
    J_f(\mathbf{x}) v \approx \frac{f(\mathbf{x} + \sigma v) - f(\mathbf{x})}{\sigma},
\end{equation}
where \( \sigma \) is a sufficiently small scalar. This approach, known as the finite difference projection method and first introduced by Brown in~\cite{brown1987local}, significantly reduces memory usage and simplifies implementation. Despite avoiding the explicit Jacobian, it maintains a high-quality approximation of the Newton direction, making it especially suitable for efficiently computing the multilinear PageRank vector. The following algorithm outlines how this technique is integrated into the Newton-GMRES scheme.
\begin{algorithm}[H]
\caption{ The Newton-GMRES with finite difference}\label{alg:A17}
\begin{algo}
\INPUT  Given a transition probability tensor $\PP$, teleportation vector $v$, damping factor \( \alpha \in [0,1) \), maximum number of iterations $K_{max}$, and convergence tolerance $\epsilon$.
\OUTPUT  The multilinear PageRank vector $\mathbf{x}$.
\STATE Initialize $k = 0$, set $\mathbf{x}_0 = v$, and choose an inner GMRES tolerance $\epsilon_{0}$.
\STATE \REPEAT
 \STATE For initial guess $\delta_{0}$, form $q_{0}=\dfrac{(f(\mathbf{x}_{k}+\sigma_{0} \delta_{0})-f(\mathbf{x}_{k}))}{\sigma_{0}}$, and $r_{0}=-f(\mathbf{x}_{k})-q_{0}$. Then, compute $\beta=\|r_{0}\|_{2}$ and $v_{1}=\dfrac{r_{0}}{\beta}$.
 \FOR{$j=1, \dots, p$}
 \STATE Form $\hat{q}_{j}=\dfrac{(f(\mathbf{x}_{k}+\sigma_{j} v_{j})-f(\mathbf{x}_{k}))}{\sigma_{j}}$, and orthogonalize it against, $v_{1},v_{2}, \dots, v_{j}$ using the Arnoldi process,
 \STATE compute $q_{j}=\dfrac{(f(\mathbf{x}_{k}+\sigma \delta_{j})-f(\mathbf{x}_{k}))}{\sigma}$ to estimate $\rho_j = \Vert f(\mathbf{x}_{k}) + q_{j}\Vert_{2}$ for the solution $\delta_{j}$,
            \STATE  if $\rho_j \leq \epsilon_k$, set $p = j$ and go to (9). 
  \ENDFOR
  \STATE Define $\bar{H}_{p} \in \RR^{(p+1)\times p}$, $V_{p} \in \RR^{n \times p}$, find $z_p$ that minimizes $\| \beta e_{1}-\Bar{H}_{p}z \|$ over all vector $z$.
\STATE Compute $\delta_{p} = \delta_{0} + V_{p}z_{p}$.
 \STATE Set $\delta_{k}=\delta_{p}$ and update $\mathbf{x}_{k+1} = \text{proj}(\mathbf{x}_{k} + \delta_p)$.
\STATE $k=k+1$.
\STATE \Until{$\|f(\mathbf{x}_{k})\|_{1} < \epsilon$}.
\end{algo}
\end{algorithm}
%\subsection{Convergence analysis of the Newton-GMRES iteration}
%We now turn our attention to the convergence behavior of the Newton-GMRES iteration applied to the multilinear PageRank problem. Since each Newton step is computed approximately using GMRES, this setting falls under the category of inexact Newton methods. 
\section{ An accelerated Newton-GMRES methods for mutilinear PageRank computations}\label{sec5}
This section discusses how the extrapolation methods can be applied to enhance the convergence of the Newton-GMRES algorithm for computing the multilinear PageRank vector. 
\subsection{ The polynomial extrapolation algorithms}
\noindent Extrapolation methods \cite{sidi2017vector,bentbib2024hosvd,bentbib2024n,bentbib2025einstien} enhance the convergence of slow iterative processes by transforming a given sequence of scalars, vectors, matrices, or, more generally, tensors into a new sequence that converges more rapidly. In the present work, since the solution is vector-valued, we focus on vector-type extrapolation. Among vector methods, two well-known polynomial techniques are the Minimal Polynomial Extrapolation (MPE) and the Reduced Rank Extrapolation (RRE). These methods have attracted considerable attention in various computational settings, as they have proven particularly effective in accelerating PageRank computations \cite{brezinski2005extrapolation}, and this is thanks to the spectral analysis of the Google matrix in \cite{SerraCapizzano2005}.

 \noindent The MPE method, introduced by Cabay and Jackson~\cite{cabay1976polynomial}, approximates the limit of the sequence by using a minimal polynomial. The RRE method, initially proposed by Eddy~\cite{eddy1979extrapolating} and Mesina~\cite{mevsina1977convergence}, improves convergence by reducing the rank of the matrix formed by the sequence. To further improve the practical implementation of these methods, efficient approaches have been developed. Sidi~\cite{sidi1986convergence} proposed optimized versions of the RRE and MPE methods through the use of QR decomposition, offering a more stable and computationally efficient implementation. Jbilou and Sadok~\cite{jbilou1999lu} also introduced an efficient MMPE implementation based on LU decomposition with pivoting.

\noindent The Algorithm \ref{alg:extrapolation} presents the step-by-step procedure for efficiently implementing MPE and RRE methods.
\begin{algorithm}[ht]
        \begin{algo}
        \caption{Algorithms for implementing  MPE and RRE \cite{sidi2012review}}\label{alg:extrapolation}
         \scriptsize
         \INPUT The sequence $\{s^{(0)}, s^{(1)},\dots, s^{(q+1)}\}$
          \OUTPUT $t^{(q)}$
            \STATE Compute $\Delta s^{(i)}=s^{(i+1)}-s^{(i)},\; i=0,1,\dots, q$.
\STATE Set $\Delta S_{j}=[\Delta s^{(0)},\Delta s^{(1)},\dots,\Delta s^{(j)}],\; j=0,1,\dots$ 
                 \STATE Compute the QR-factorization of $\Delta S_{j}$, namely, $\Delta S_{j} = Q_{j} R_{j}$.
                 \begin{itemize}
                     \item \textbf{MPE}: Solve {$R_{q-1}c=-r_{q}$}; where $r_{q}=[r_{0q},r_{1q},\dots,r_{q-1,q}]^{T}$ and $c=[c_{0},c_{1},\dots,c_{q-1}]^{T}$. Then,  calculate $\alpha=\sum\limits_{i=0}^{q} c_{i}$, with $c_{q}=1$, and set {$\gamma_{i}=c_{i}/\alpha,\; i=0,1,\dots,q$}.
                     \item \textbf{RRE}: Solve {$R_{q}^{T} R_{q}d=e$}; where $d=[d_{0},d_{1},\dots,d_{q}]^{T}$ and $e=[1,1,\dots,1]^{T} \in \RR^{q+1}$. Then, set $\lambda=(\sum\limits_{i=0}^{q} d_{i})^{-1}$, and $\gamma=\lambda d$, that is, { $\gamma_{i}=\lambda d_{i},\; i=0,1,\dots,q$}.
                 \end{itemize}
                 \STATE  Compute {$\xi=[\xi_{0},\xi_{1},\dots,\xi_{q-1}]^{T}$} where $\xi_{0}=1-\gamma_{0}$ and $\xi_{j}=\xi_{j-1}-\gamma_{j}$,\; $j=1,\dots,q-1$.
\STATE Compute $t^{(q)}=s^{(0)}+Q_{q-1}(R_{q-1} {\xi}).$
        \end{algo}
    \end{algorithm}
    
 Having discussed these methods, let us now explain how they can be applied to improve the convergence of the Newton-GMRES algorithm. Firstly, we generate $q+2$ iterations to extrapolate the solution, wherein each iteration we utilize either Algorithm \ref{alg:A16} or Algorithm \ref{alg:A17} to generate the sequences. Next, we extrapolate a new approximation using either the MPE or RRE extrapolation techniques. Finally, we normalize to guarantee a stochastic solution, and then we check for convergence, repeating this algorithm until convergence is achieved. We summarize all of these steps in Algorithm \ref{alg:A21}.
\begin{algorithm}[ht] 
\begin{algo}
\caption{ The Newton-GMRES with extrapolation methods}\label{alg:A21}
\INPUT Given a transition probability tensor $\PP$, $\alpha \in [0,1)$, maximum number of iterations $K_{max}$,  number of iterates $q$, and a termination tolerance $\epsilon$.
\OUTPUT  The multilinear PageRank vector $\mathbf{x}$.
\STATE Initialize $k = 0$, choose an inner GMRES tolerance $\epsilon_{0}$, and set $\mathbf{x}_0 = v$ as the initial guess.
\STATE \Do
 \STATE $s_{0}=\mathbf{x}_{k}$
\FOR{$i=0 \dots q+1$}
 \STATE compute $\delta_{i}$ the solution of $J(s_{i})\delta_{i}=-f(s_{i})$ using GMRES
\STATE  $s_{i+1}=s_{i}+\delta_{i}$
\ENDFOR
 \STATE Apply the extrapolation methods MPE or RRE to $s_{0},s_{1},\dots, s_{q+1}$ to compute  the approximation $t^{(q)}$.
\STATE  Set $\mathbf{x}_{k+1}=t^{(q)}$
\STATE  $\mathbf{x}_{k+1} = \text{proj}(\mathbf{x}_{k+1})$.
\STATE $k=k+1$.
 \STATE \Until{$\|f(\mathbf{x}_{k})\|_{1} < \epsilon$}.
\end{algo}
\end{algorithm}
\subsection{The Anderson acceleration method}

The Anderson acceleration (AA) method~\cite{anderson1965iterative} is a technique designed to speed up the convergence of fixed-point iterations.  
Instead of relying solely on the most recent iterate to compute the next approximation, AA combines information from several previous iterates to produce an improved estimate.  
The combination coefficients are determined by minimizing the residual norm, which often leads to a substantial reduction in the number of iterations.

\noindent In the context of Newton-GMRES, each update can be expressed as
\[
\mathbf{x}_{k+1} = \mathbf{x}_k + \delta_k,
\]
where $\delta_k$ solves the linear system \( J(\mathbf{x}_k) \delta_k = -f(\mathbf{x}_k) \).  
This formulation can be interpreted as a fixed-point mapping, and applying AA to this sequence can accelerate its convergence. 
\noindent In its general form, AA constructs the next iterate by taking a linear combination of the last \(t\) updates
\[
\delta_{k-j} = \mathbf{x}_{k-j+1} - \mathbf{x}_{k-j}, \quad j = 0, 1, \dots, t-1,
\]
and choosing the combination coefficients to minimize the residual norm of the new iterate.  The parameter \(t \geq 1\) determines how many past iterates are used, with larger values incorporating more history and potentially improving convergence at the cost of increased computation.

In this work, we restrict ourselves to the simplest case \(t = 1\), which already provides a noticeable performance improvement in practice.  
The corresponding procedure is presented below.
\begin{algorithm}[H]
\caption{Newton-GMRES with Anderson Acceleration}\label{alg:A19}
\begin{algo}
\INPUT A transition probability tensor $\mathcal{P}$, damping factor $\alpha \in [0,1)$, maximum number of iterations $K_{\max}$, and a convergence tolerance $\epsilon$.
\OUTPUT The multilinear PageRank vector $\mathbf{x}$.
\STATE Initialize $k = 0$, set $\mathbf{x}_0 = v$, and an inner GMRES tolerance $\epsilon_0$.
\STATE  \REPEAT
            \STATE Solve $J_{f}\left(\mathbf{x}_{k}\right) \delta_k = -f\left(\mathbf{x}_{k}\right)$, using GMRES
            
    \STATE Compute $\gamma^k = \frac{(\delta_k, \delta_k - \delta_{k-1})}{\| \delta_k - \delta_{k-1} \|^2}$
    \STATE Update:
    \[
    \mathbf{x}_{k+1} = \mathbf{x}_k + \delta_k - \gamma^k \left[(\mathbf{x}_k - \mathbf{x}_{k-1}) + (\delta_k - \delta_{k-1})\right]
    \]
    \STATE  $\mathbf{x}_{k+1} = \text{proj}(\mathbf{x}_{k+1})$
    \STATE $k = k + 1$
\STATE \Until{$\|f(\mathbf{x}_k)\|_1 < \epsilon$}
\end{algo}
\end{algorithm}

\section{Numerical Experiments}\label{sec5k}
In this section, we present a comprehensive set of numerical experiments to assess the performance of our Newton-GMRES-based methods for solving the multilinear PageRank problem. We evaluate the convergence behavior, computational efficiency, and robustness of the baseline Newton-GMRES (NG) algorithm, as well as its extrapolated variants incorporating Minimal Polynomial Extrapolation (NG-MPE) and Reduced Rank Extrapolation (NG-RRE). For comparison, we also include several established Newton-type solvers: the classical Newton method (N), the Modified Newton method (NM), and Newton-GMRES with Anderson acceleration (NA).

Our experiments are organized into three categories. First, we use a set of benchmark tensors proposed by Gleich et al.~\cite{gleich2015multilinear}, which are widely employed in the literature. Second, we generate synthetic tensors using the stochastic construction of~\cite{guo2018modified}, varying the dimension $n$ to study scalability with respect to problem size. Third, we evaluate the methods on real-world networks from~\cite{grindrod2016comparison,california2006}, covering a broad spectrum of graph structures and sparsity patterns.

\noindent  
For a fair comparison, all algorithms are initialized with the same starting vector $\mathbf{x}_{0} = \mathbf{v} = \frac{e}{n}$, where $e = [1, \dots, 1]^T$. The maximum number of iterations is set to $k_{\text{max}} = 1000$. For the extrapolation methods, we consider $q \in \{3,4,5\}$, while for the Modified Newton method we fix the parameter $n_i = 4$. The stopping tolerance is set to $\epsilon = 10^{-15}$, and the inner GMRES tolerance is $\epsilon_{0} = 10^{-14}$ with $p = 40$.  

\noindent All computations were performed in MATLAB R2020a on an Intel\textsuperscript{\textregistered} Core\textsuperscript{\texttrademark} i7-1065G7 CPU @ 1.30~GHz (up to 1.50~GHz), with 16~GB of RAM.

\subsection*{Example 1: Benchmark problems}
We conducted experiments across the entire benchmark suite comprising 29 third-order stochastic tensors with dimensions $n = 3, 4, 6$, as originally introduced in~\cite{gleich2015multilinear}. Each tensor $\mathcal{P}_{i,j}$ is reshaped into a column-stochastic matrix $R_{i,j} \in \mathbb{R}^{n \times n^2}$, where $i$ denotes the tensor size and $j$ the problem index. The multilinear PageRank equation under consideration is given by
\[
f(\mathbf{x}) = \alpha R_{i,j} (\mathbf{x} \otimes \mathbf{x}) + (1 - \alpha) v-\mathbf{x}=0
\]

To illustrate the overall performance trends, we report results for two representative instances: $R_{3,5}$ and $R_{4,8}$. These examples capture the general behavior observed across the benchmark set when varying the damping factor $\alpha$. In particular, we evaluate both the theoretically favorable regime $\alpha < \frac{1}{m-1}$, where convergence of Newton-type methods is expected, and more challenging settings with larger values of $\alpha$. In all cases, the extrapolated Newton-GMRES variants (NGMRES-MPE and NGMRES-RRE) demonstrate superior convergence speed and computational efficiency, confirming their robustness across a wide spectrum of problem instances. The corresponding results are summarized in Tables~\ref{tab1:iterations},~\ref{tab1:time},~\ref{tab2:iterations},  and~\ref{tab2:time}, and to clearly visualize the performance, we plot some of these results in the following Figures~\ref{fig:R35_results} and~\ref{fig:R48_results}.

\begin{table}[H]
\centering
\caption{Number of iterations for Problem $R_{3,5}$.}
\label{tab1:iterations}
\begin{tabular}{lccccc}
\toprule
$\alpha$ & Newton & Newton-GMRES & NGMRES-MPE & NGMRES-RRE & Newton-Anderson \\
\midrule
0.490 & 4   & 5   & \textbf{2}   & \textbf{2}   & 9   \\
0.600 & 4   & 5   & \textbf{3}   & \textbf{3}   & 10  \\
0.700 & 5   & 6   & \textbf{4}   & \textbf{4}           & 11  \\
0.800 & 6   & 7   & \textbf{3}   & \textbf{3}   & 12  \\
0.850 & 6   & 7   & \textbf{3}   & \textbf{4}          & 14  \\
0.900 & 5   & 6   & \textbf{2}   & \textbf{2}   & 14  \\
0.950 & 67  & 68  & \textbf{2}   & \textbf{2}            & 36  \\
0.990 & 27  & 28  & \textbf{2}  & \textbf{2}           & 282 \\
0.999 & 223 & 144 &\textbf{9}           & \textbf{8}  & 89  \\
\bottomrule
\end{tabular}
\end{table}

\begin{table}[H]
\centering
\caption{CPU time (in seconds) for  Problem $R_{3,5}$.}
\label{tab1:time}
\begin{tabular}{lccccc}
\toprule
$\alpha$ & Newton & Newton-GMRES & NGMRES-MPE & NGMRES-RRE & Newton-Anderson \\
\midrule
0.490 & 0.0447  & 0.0267  & 0.0130  & \textbf{0.0095}  & 0.0745  \\
0.600 & 0.0037  & 0.0203  & \textbf{0.0030}  & 0.0099  & 0.0104  \\
0.700 & 0.0026  & \textbf{0.0017}  & 0.0077  & 0.0038  & 0.0024  \\
0.800 & 0.0011  & 0.0025  & 0.0013  & \textbf{0.0008}  & 0.0036  \\
0.850 & 0.0098  & 0.0038  & \textbf{0.0009}  & 0.0013  & 0.0105  \\
0.900 & 0.0005  & 0.0004  & \textbf{0.0001}  & 0.0002  & 0.0029  \\
0.950 & 0.0854  & 0.0801  & 0.0005  & \textbf{0.0004}  & 0.0112  \\
0.990 & 0.0126  & 0.0169  & \textbf{0.0005}  & \textbf{0.0005}  & 0.7138  \\
0.999 & 1.7639  & 0.2566  & 0.0097 & \textbf{0.0066}  & 0.0739  \\
\bottomrule
\end{tabular}
\end{table}
\begin{table}[H]
\centering
\caption{Number of iterations for Problem $R_{4,8}$.}
\label{tab2:iterations}
\begin{tabular}{lccccc}
\toprule
$\alpha$ & Newton & Newton-GMRES & NGMRES-MPE & NGMRES-RRE & Newton-Anderson \\
\midrule
0.490 & 4   & 5   & \textbf{2}   & \textbf{2}   & 8   \\
0.600 & 4   & 5   & \textbf{3}   & \textbf{2}   & 9  \\
0.700 & 5   & 6   & \textbf{3}   & \textbf{3}           & 10  \\
0.800 & 5   & 7   & \textbf{3}   & \textbf{3}   & 12  \\
0.850 & 6   & 7   & \textbf{3}   & \textbf{3}          & 11  \\
0.900 & 6   & 7   & \textbf{3}   & \textbf{3}   & 11  \\
0.950 & 6  & 7  & \textbf{3}   & \textbf{3}            & 11  \\
0.990 & 6  & 7  & \textbf{3}  & \textbf{3}           & 11 \\
0.999 & 6 & 7 &\textbf{3}           & \textbf{3}  & 11  \\
\bottomrule
\end{tabular}
\end{table}

\begin{table}[H]
\centering
\caption{CPU time (in seconds) for Problem $R_{4,8}$.}
\label{tab2:time}
\begin{tabular}{lccccc}
\toprule
$\alpha$ & Newton & Newton-GMRES & NGMRES-MPE & NGMRES-RRE & Newton-Anderson \\
\midrule
0.490 & 0.046952 & 0.026037 & 0.014426 & 0.006120 & 0.038233 \\
0.600 & 0.038113 & 0.014407 & 0.001009 & 0.000803 & 0.046596 \\
0.700 & 0.048978 & 0.030498 & 0.012748 & 0.001470 & 0.046808 \\
0.800 & 0.078088 & 0.057764 & 0.003175 & 0.003762 & 0.061970 \\
0.850 & 0.057822 & 0.033389 & 0.001201 & 0.000936 & 0.056272 \\
0.900 & 0.060610 & 0.036466 & 0.001896 & 0.001212 & 0.055810 \\
0.950 & 0.119066 & 0.035245 & 0.001993 & 0.002064 & 0.058882 \\
0.990 & 0.058934 & 0.035319 & 0.002068 & 0.001194 & 0.055587 \\
0.999 & 0.068145 & 0.035257 & 0.001849 & 0.001368 & 0.053926 \\
\hline
\bottomrule
\end{tabular}
\end{table}
\begin{figure}[H]
    \centering
    \begin{subfigure}[b]{0.48\textwidth}
        \includegraphics[width=\textwidth]{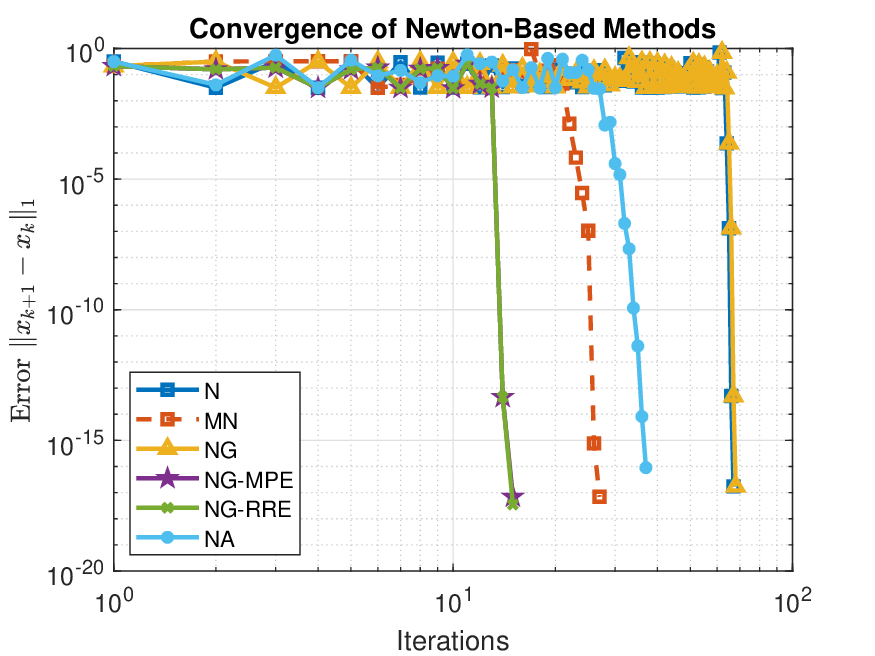}
        \caption{The problem $R_{3,5}$}
        \label{fig:R35_results}
    \end{subfigure}
    \hfill
    \begin{subfigure}[b]{0.48\textwidth}
        \includegraphics[width=\textwidth]{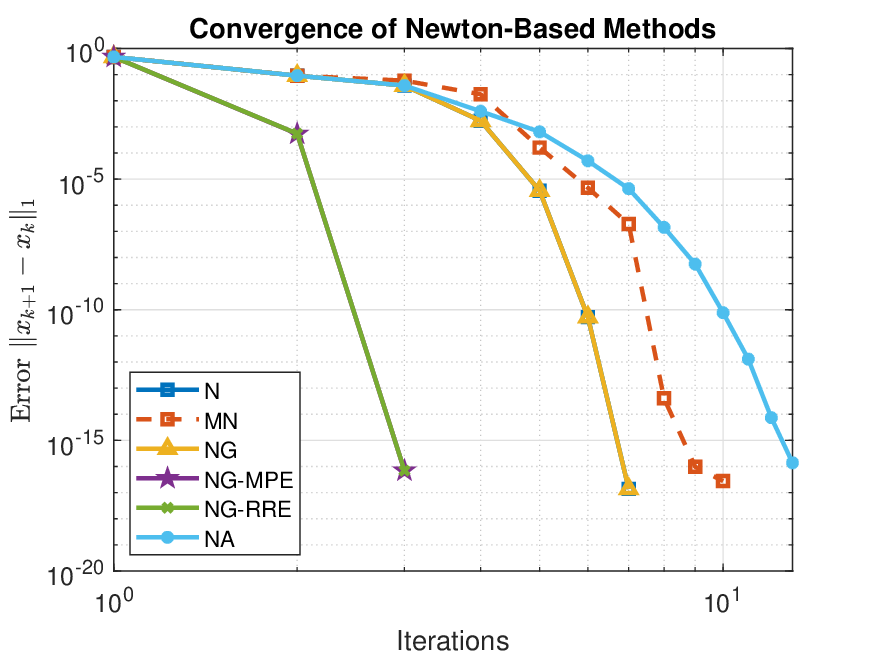}
        \caption{The problem $R_{4,8}$}
        \label{fig:R48_results}
    \end{subfigure}
    \caption{ The performance of our proposed methods for the problems $R_{3,5}$ and $R_{4,8}$ with $\alpha = 0.95$.}
    \label{fig1}
\end{figure}

To further assess the accuracy and efficiency of the proposed methods, we report performance profiles in Figures~\ref{fig1},~\ref{fig2},~\ref{fig3}, and ~\ref{fig4}, based on iteration count and CPU time. These profiles allow for a comprehensive comparison across all tested methods. For each test problem, we compute the ratio between the performance of a given method and the best performance obtained on that problem, whether in terms of the number of iterations or total runtime. Then, for each value of $\tau \geq 1$, the profile indicates the fraction of problems for which the method performs within a factor $\tau$ of the best. This approach provides a standardized and quantitative view of both the efficiency and robustness of the solvers. The methodology follows the framework introduced by Dolan and Moré~\cite{dolan2002benchmarking}.

\begin{figure}[H]
    \centering
    \begin{subfigure}[b]{0.48\textwidth}
        \includegraphics[width=\textwidth]{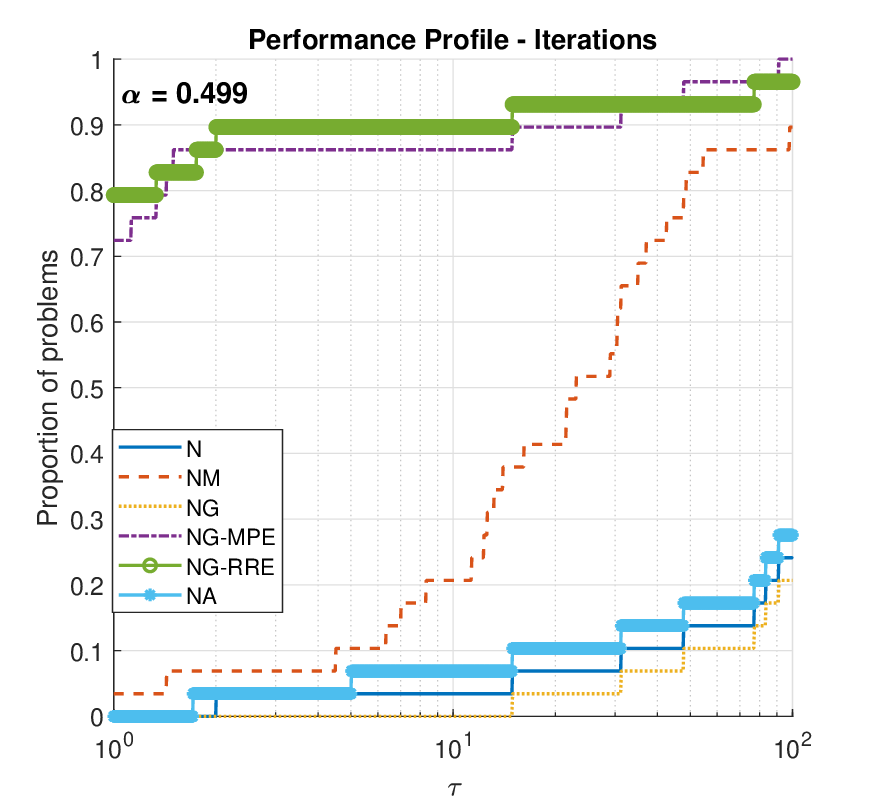}
        \caption{Number of iterations}
        \label{fig1:iter}
    \end{subfigure}
    \hfill
    \begin{subfigure}[b]{0.48\textwidth}
        \includegraphics[width=\textwidth]{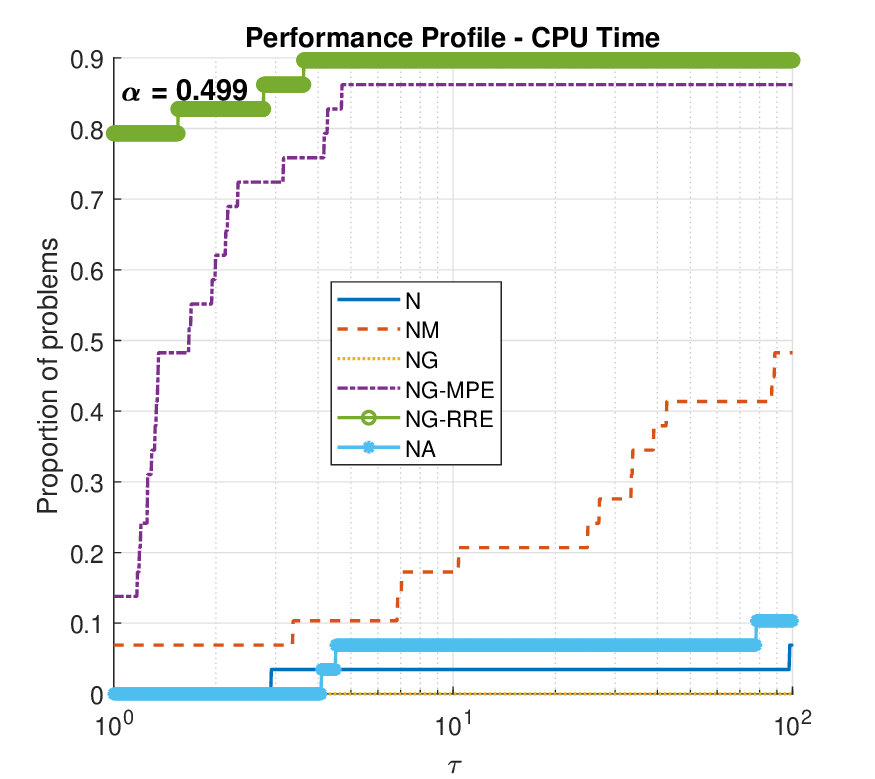}
        \caption{CPU time (in seconds)}
        \label{fig1:time}
    \end{subfigure}
    \caption{ Performance profiles for the 29 benchmark tensors with $\alpha = 0.499$.}
    \label{fig1k}
\end{figure}
\begin{figure}[H]
    \centering
    \begin{subfigure}[b]{0.48\textwidth}
        \includegraphics[width=\textwidth]{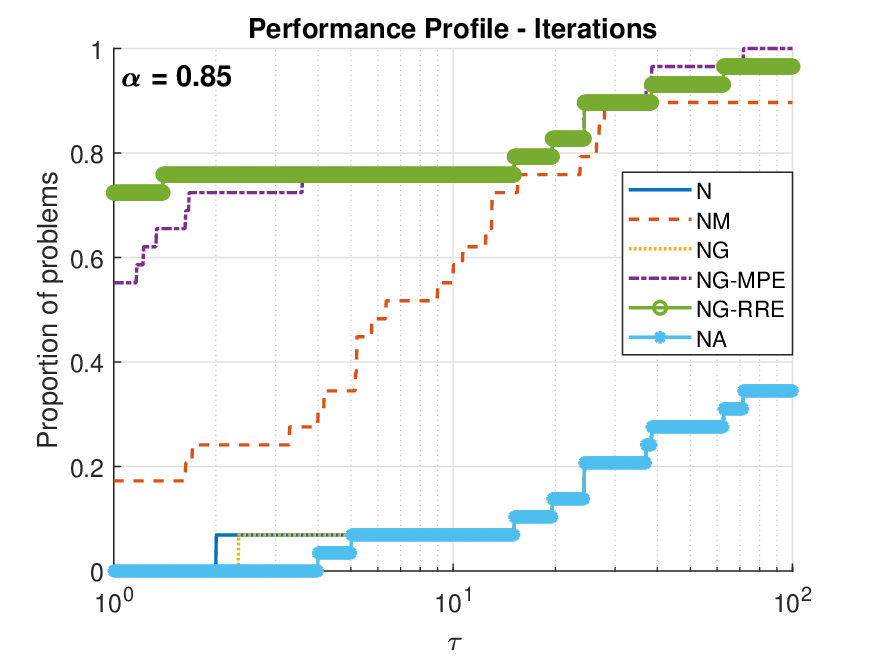}
        \caption{Number of iterations}
        \label{fig2:iter}
    \end{subfigure}
    \hfill
    \begin{subfigure}[b]{0.48\textwidth}
        \includegraphics[width=\textwidth]{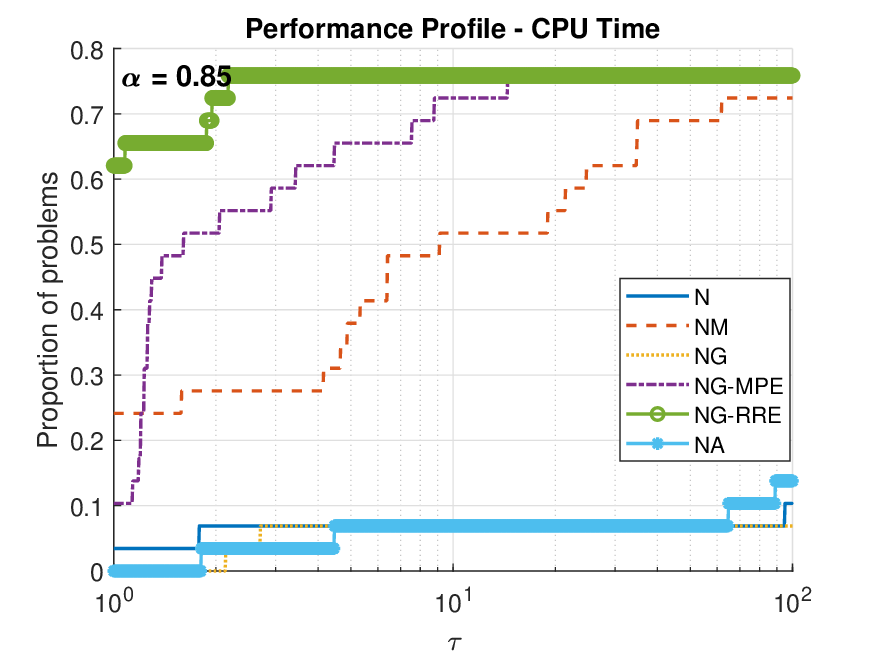}
        \caption{CPU time (in seconds)}
        \label{fig2:time}
    \end{subfigure}
    \caption{ Performance profiles for the 29 benchmark tensors with $\alpha = 0.85$.}
    \label{fig2}
\end{figure}
\begin{figure}[H]
    \centering
    \begin{subfigure}[b]{0.48\textwidth}
        \includegraphics[width=\textwidth]{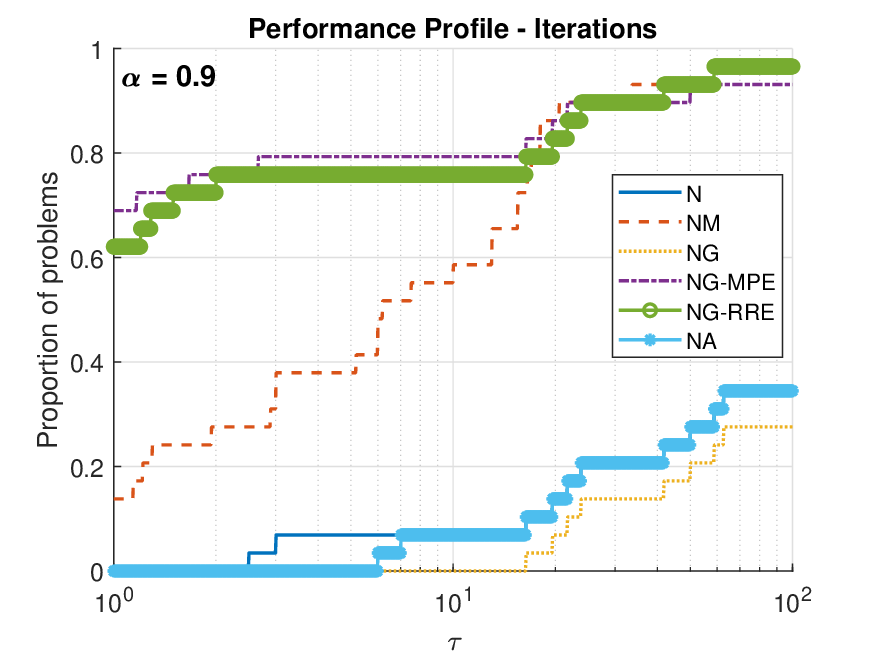}
        \caption{Number of iterations}
        \label{fig3:iter}
    \end{subfigure}
    \hfill
    \begin{subfigure}[b]{0.48\textwidth}
        \includegraphics[width=\textwidth]{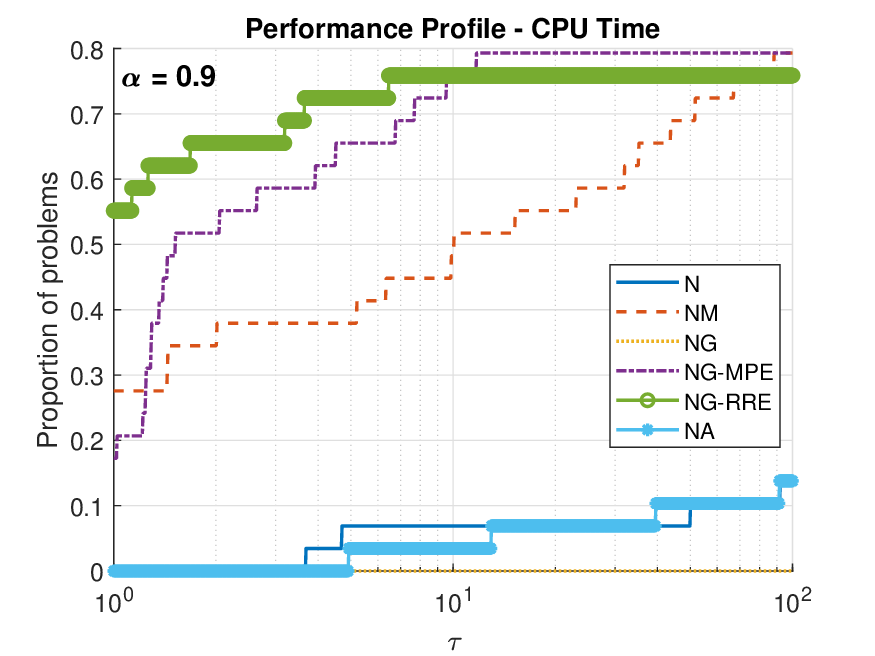}
        \caption{CPU time (in seconds)}
        \label{fig3:time}
    \end{subfigure}
    \caption{ Performance profiles for the 29 benchmark tensors with $\alpha = 0.90$.}
    \label{fig3}
\end{figure}
\begin{figure}[H]
    \centering
    \begin{subfigure}[b]{0.48\textwidth}
        \includegraphics[width=\textwidth]{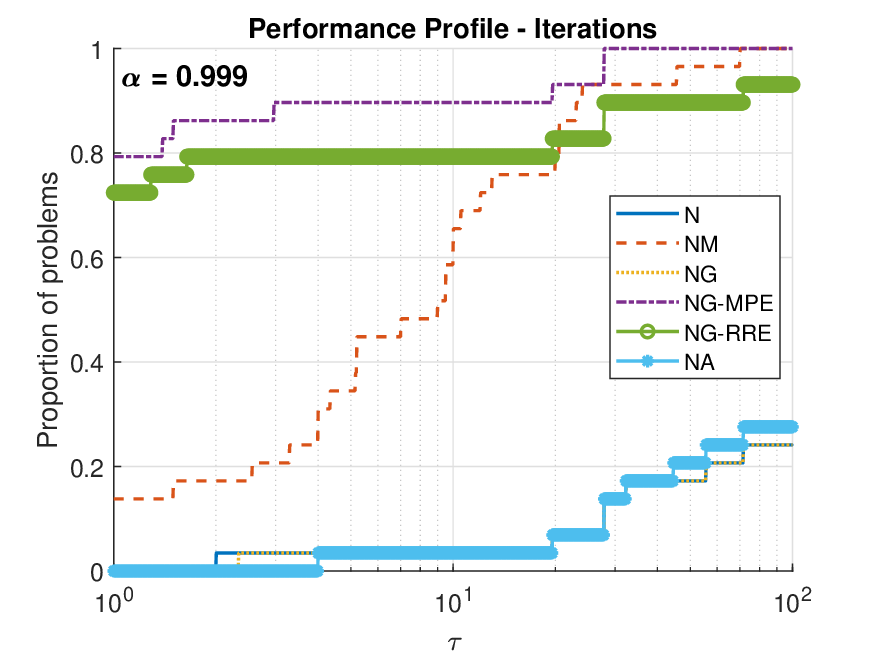}
        \caption{Number of iterations}
        \label{fig4:iter}
    \end{subfigure}
    \hfill
    \begin{subfigure}[b]{0.48\textwidth}
        \includegraphics[width=\textwidth]{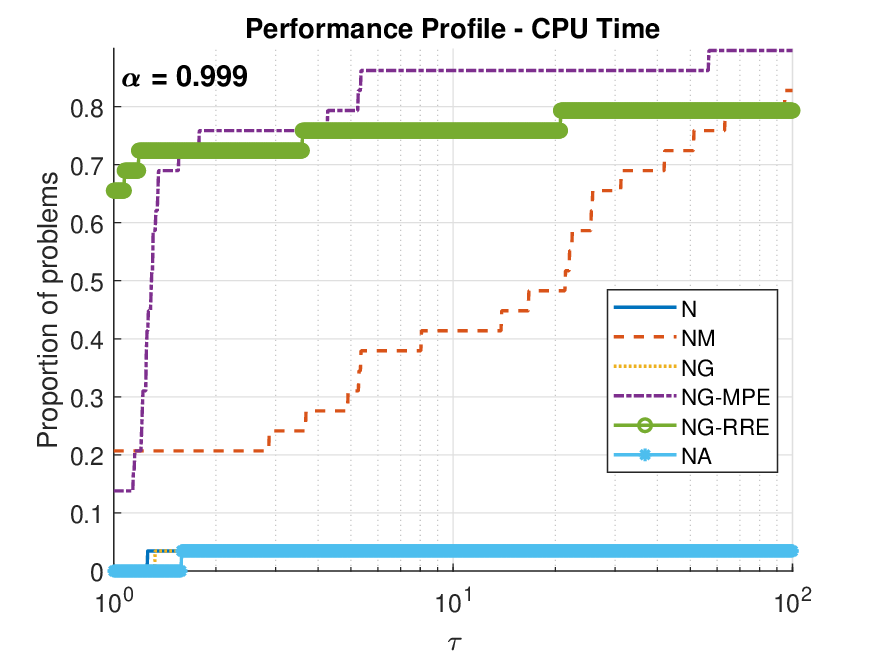}
        \caption{CPU time (in seconds)}
        \label{fig4:time}
    \end{subfigure}
    \caption{ Performance profiles for the 29 benchmark tensors with $\alpha = 0.999$.}
    \label{fig4}
\end{figure}

Tables~\ref{tab1:time}, \ref{tab1:iterations}, \ref{tab2:time}, and \ref{tab2:iterations} reveal that as $\alpha$ approaches~1, both the standard Newton method and Newton-GMRES exhibit a sharp increase in iteration count and runtime. In contrast, the extrapolated variants of Newton-GMRES, namely NG-MPE and NG-RRE, maintain consistently low iteration counts and CPU times even for challenging cases such as $\alpha = 0.999$ that demonstrate improved stability and efficiency across all tested values of~$\alpha$.

Figures~\ref{fig:R35_results} and~\ref{fig:R48_results} further confirm these findings. The residual norm plots clearly show that NG-MPE and NG-RRE converge much faster than the classical approaches, particularly in stiff regimes. This highlights the advantage of incorporating extrapolation into the Newton-GMRES framework to accelerate convergence without sacrificing robustness.

In addition, the performance profiles in Figures~\ref{fig1k}--\ref{fig4} provide a comprehensive comparison of the tested methods over the full benchmark suite. These profiles illustrate the proportion of problems solved within a given factor of the best method, in terms of both iteration count and CPU time. NG-MPE and NG-RRE dominate consistently, with their performance gap increasing as $\alpha$ grows. For $\alpha = 0.90$ and $\alpha = 0.999$, the superiority of these methods becomes particularly evident, confirming their scalability and robustness in stiff scenarios. These results underscore the practical benefit of coupling Krylov-based solvers with polynomial extrapolation for efficient large-scale multilinear PageRank computation.

\subsection*{Example 2: Synthetic Problems}

To further evaluate the scalability of the proposed solvers, we consider a series of synthetic problems constructed from randomly generated third-order stochastic tensors. These artificial datasets simulate networks with controlled structure and allow us to examine how the performance of the algorithms evolves as the problem size increases.

Each tensor is generated by constructing a random matrix $R \in \mathbb{R}^{n \times n^2}$ with entries uniformly drawn from the interval $[0,1]$, followed by a column-wise normalization to ensure the stochasticity condition required for the multilinear PageRank model. The teleportation vector $v \in \mathbb{R}^n$ is initialized as the uniform vector and normalized to form a probability distribution. The resulting tensor is then reshaped into its mode-1 matricization for use in computations. The detailed generation procedure is given in Algorithm~\ref{alg:page}.

\begin{algorithm}[H]
\begin{algo}
\caption{Generation of a PageRank tensor $R$}
\label{alg:page}
\scriptsize
\INPUT Integer $n$
\OUTPUT Teleportation vector $v \in \RR^n$ and normalized matrix $R \in \RR^{n \times n^2}$
\STATE Initialize $v \leftarrow e$ \COMMENT{$e$ is the vector of all ones}
\STATE Initialize a random matrix $R \in \RR^{n \times n^2}$ with entries uniformly drawn from $[0,1]$
\STATE Compute $s \leftarrow v^\top R$
\FOR{$i = 1$ \To $n^2$}
    \IF{$s(i) > 0$}
        \STATE $R(:,i) \leftarrow R(:,i) / s(i)$
    \ELSE
        \STATE $R(:,i) \leftarrow v$
    \ENDIF
\ENDFOR
\STATE Normalize $v \leftarrow v / \|v\|_1$
\end{algo}
\end{algorithm}

We conduct experiments for six different tensor sizes: $n = 100$, $300$, $500$, and $800$. For each case, the algorithms are tested across a wide range of damping factors, with $\alpha$ varying from $0.70$ to $0.999$.

In Figures~\ref{fig:100}--\ref{fig:500}, we plot the evolution of the relative residual norm $\|\mathbf{x}_{k+1}-\mathbf{x}_k\|_1$ as a function of the iteration number for several values of $\alpha$. This representation allows us to observe how the convergence behavior of the methods changes as $\alpha$ increases. In particular, we aim to assess the impact of large damping factors on both convergence rate and numerical stability. The results show that, while the convergence tends to slow down when $\alpha$ approaches 1, the accelerated variants still exhibit stable and efficient performance even in stiff regimes.

\begin{figure}[H]
    \centering

    \begin{subfigure}[b]{0.45\textwidth}
        \centering
        \includegraphics[width=\textwidth]{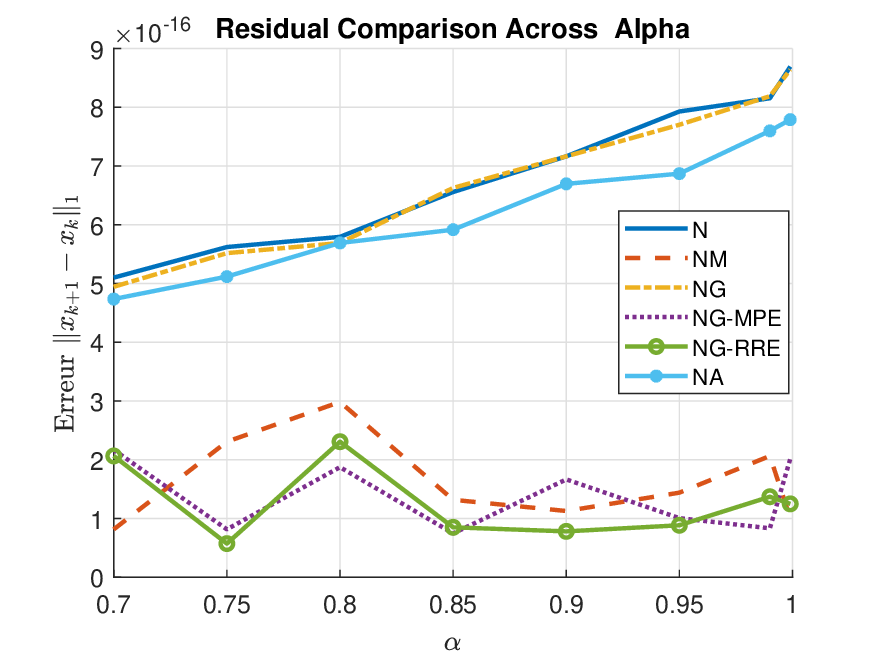}
        \caption{$n = 100$}
        \label{fig:100}
    \end{subfigure}
    \hfill
    \begin{subfigure}[b]{0.45\textwidth}
        \centering
        \includegraphics[width=\textwidth]{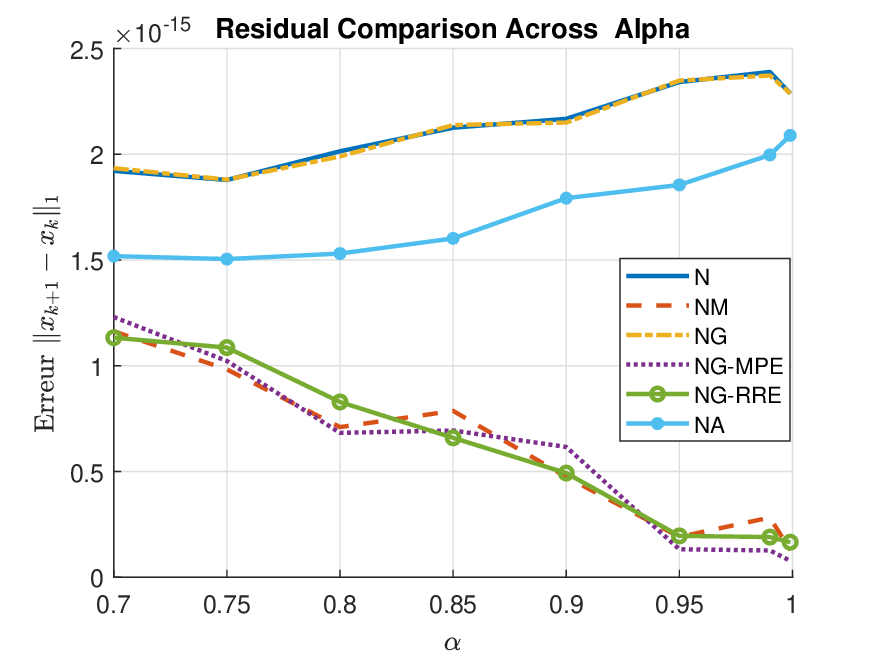}
        \caption{$n = 300$}
        \label{fig:300}
    \end{subfigure}

    \vskip\baselineskip

    \begin{subfigure}[b]{0.45\textwidth}
        \centering
        \includegraphics[width=\textwidth]{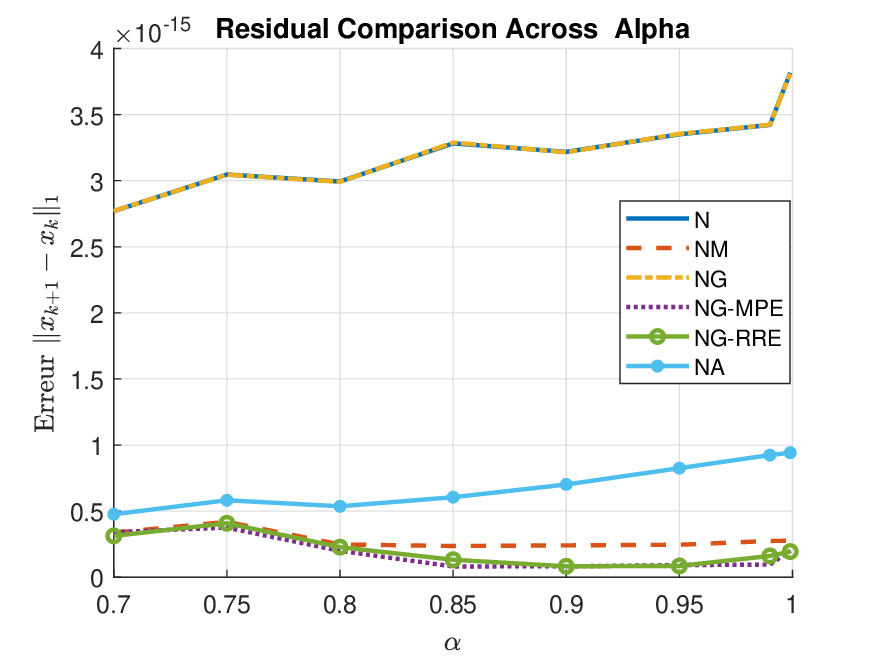}
        \caption{$n = 500$}
        \label{fig:500}
    \end{subfigure}
    \hfill
    \begin{subfigure}[b]{0.45\textwidth}
        \centering
        \includegraphics[width=\textwidth]{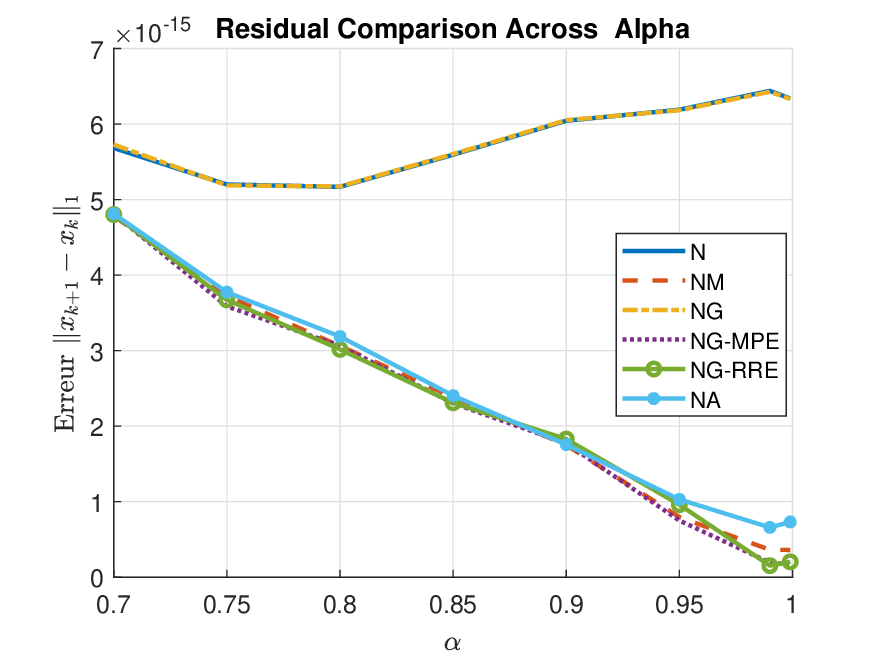}
        \caption{$n = 800$}
        \label{fig:800}
    \end{subfigure}

    \caption{Convergence behavior for different values of $n$ and multiple values of $\alpha$.}
    \label{fig:convergence_all}
\end{figure}

\subsection*{Example 3: Real-World Problems}

To evaluate the practical applicability of the proposed algorithms, we consider a set of real-world directed networks, reformulated as third-order PageRank problems. This reformulation follows a tensor-based modeling approach inspired by~\cite{gleich2015multilinear,benson2015tensor}, which captures higher-order dynamics through local three-node cycles.

Given a directed graph $\mathcal{G} = (\mathcal{V}, \mathcal{E})$ with $|\mathcal{V}| = n$ nodes and $|\mathcal{E}| = l$ edges, we construct the third-order adjacency tensor $\mathcal{T} = [t_{ijk}] \in \mathbb{R}^{n \times n \times n}$ such that

\[
t_{ijk} = 
\begin{cases}
1 & \text{if } i,j,k \text{ forms a directed 3-cycle} \begin{tikzpicture}[baseline={([yshift=-.6ex]current bounding box.center)}, scale=0.4, every node/.style={circle, draw, inner sep=1pt, font=\scriptsize}]
  \node (i) at (90:1) {$i$};
  \node (j) at (210:1) {$j$};
  \node (k) at (330:1) {$k$};
  \draw[->, >=stealth] (i) -- (j);
  \draw[->, >=stealth] (j) -- (k);
  \draw[->, >=stealth] (k) -- (i);
\end{tikzpicture}, \\
0 & \text{otherwise}.
\end{cases}
\]
This tensor encodes triadic cyclic interactions that frequently occur in many real-world networks.

The tensor $\mathcal{T}$ is then unfolded along mode one to form a matrix $Q \in \mathbb{R}^{n \times n^2}$. Each column of $Q$ is normalized so that its entries sum to one whenever the column contains at least one nonzero value; otherwise, the column is replaced by a fixed stochastic teleportation vector $\mathbf{v} \in \mathbb{R}^n$.  

In parallel, we build the standard first-order random-walk component from the adjacency matrix $A$ of $\mathcal{G}$. Let $D$ be the diagonal degree matrix with $D_{ii} = \sum_j A_{ij}$, and define
\[
P = D^{\dagger} A,
\]
where $D^{\dagger}$ denotes the Moore–Penrose pseudoinverse of $D$.

To account for dangling nodes and preserve stochasticity, we use the operator
\[
\mathrm{dang}(B) = e_r^\top - e_n^\top B,
\]
for any $B \in \mathbb{R}^{n \times r}$, where $e_k$ is the all-ones vector of dimension $k$.  

Finally, we construct the mode-1 unfolding of the stochastic tensor $R$ as a convex combination of higher-order and first-order components:
\[
R = \gamma \left[ Q + \mathbf{v} \cdot \mathrm{dang}(Q) \right] 
  + (1 - \gamma) \left[ P + \mathbf{v} \cdot \mathrm{dang}(P) \right] e_n^\top,
\]
where $\gamma \in [0,1]$ controls the relative contribution of each component.  

The multilinear PageRank vector $\mathbf{x}$ is then defined as the unique solution of the fixed-point equation
\[
f(\mathbf{x}) = \alpha R(\mathbf{x} \otimes \mathbf{x}) + (1 - \alpha)\mathbf{v}-\mathbf{x}=0,
\]
with damping factor $\alpha \in [0,1)$.

Table~\ref{tab:networks} summarizes the key characteristics of the real-world networks used in our experiments. Each dataset is described by its number of nodes and edges, as well as the total number of directed three-node cycles (non-zero entries in the tensor $\mathcal{T}$), which reflect the richness of higher-order interactions captured by the model.

\begin{table}[H]
\centering
\resizebox{0.85\textwidth}{!}{
\begin{tabular}{@{}lllll@{}}
\toprule
Network & Reference & Nodes ($n$) & Edges ($l$) & 3-Cycles \\
\midrule
Edinburgh & \cite{grindrod2016comparison} & 1645 & 4292 & 1404 \\
Nottingham & \cite{grindrod2016comparison} & 2066 & 6310 & 6162 \\
Cardiff & \cite{grindrod2016comparison} & 2685 & 8888 & 15186 \\
Bristol & \cite{grindrod2016comparison} & 2892 & 9076 & 7722 \\
California & \cite{california2006} & 9664 & 16150 & 537 \\
\bottomrule
\end{tabular}
}
\caption{Summary of real-world networks used to construct PageRank tensors.}
\label{tab:networks}
\end{table}
In the following figures, we show the convergence behavior (left) and the total execution time (right) of the tested methods on the real-world networks listed in Table~\ref{tab:networks}, using a damping factor of $\alpha = 0.99$.

The results show similar patterns across all datasets. The extrapolated Newton-GMRES methods (NG-MPE and NG-RRE) reduce the residual very well, but the standard Newton-GMRES (NG) is usually the fastest in terms of execution time, especially on large networks. This shows that solving the Newton system approximately using Krylov subspace methods and without extrapolation can be both fast and accurate.

Newton-Anderson (NA) is also an effective method for large problems. It converges quickly, with fewer iterations and acceptable running time. Its performance improves as the network size increases.

As shown in Figures~\ref{fig:edinburgh_results} to~\ref{fig:bristole_results}, NG offers a good balance between speed and precision, while NG-MPE and NG-RRE reach more accurate solutions with slightly more cost. NA also performs well on large networks.

On the other hand, the Modified Newton (NM) method is the slowest and requires the most iterations in all cases. These results confirm that classical Newton methods are less suitable for large problems, while accelerated Newton-Krylov approaches especially NG, NG-MPE, NG-RRE and NA are more efficient and scalable for computing the multilinear PageRank vector on real data.

\begin{figure}[H]
    \centering

    \begin{subfigure}[b]{0.48\textwidth}
        \centering
        \includegraphics[width=\textwidth]{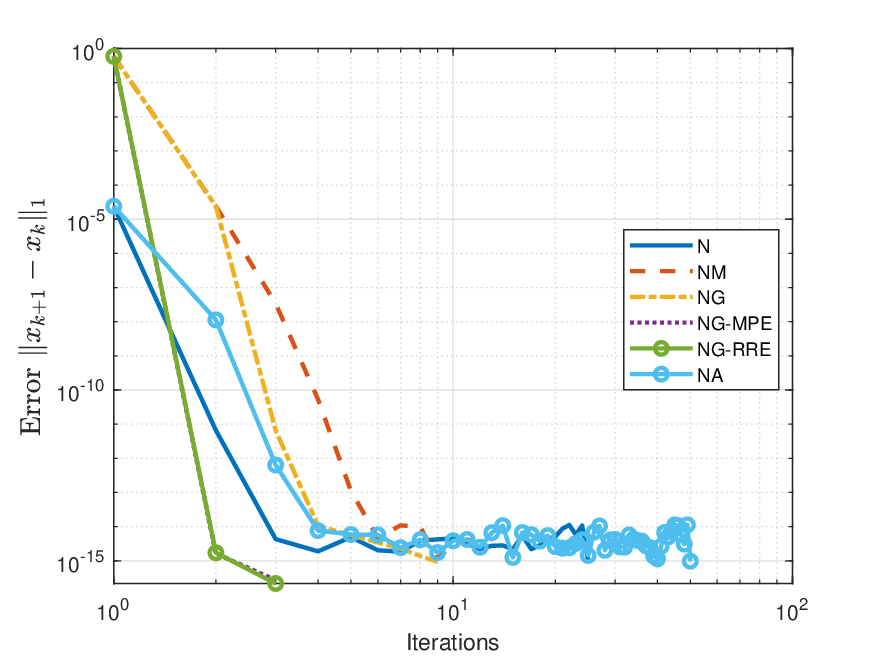}
        \caption{Convergence comparison}
        \label{fig:edinburgh_residual}
    \end{subfigure}
    \hfill
    \begin{subfigure}[b]{0.48\textwidth}
        \centering
        \includegraphics[width=\textwidth]{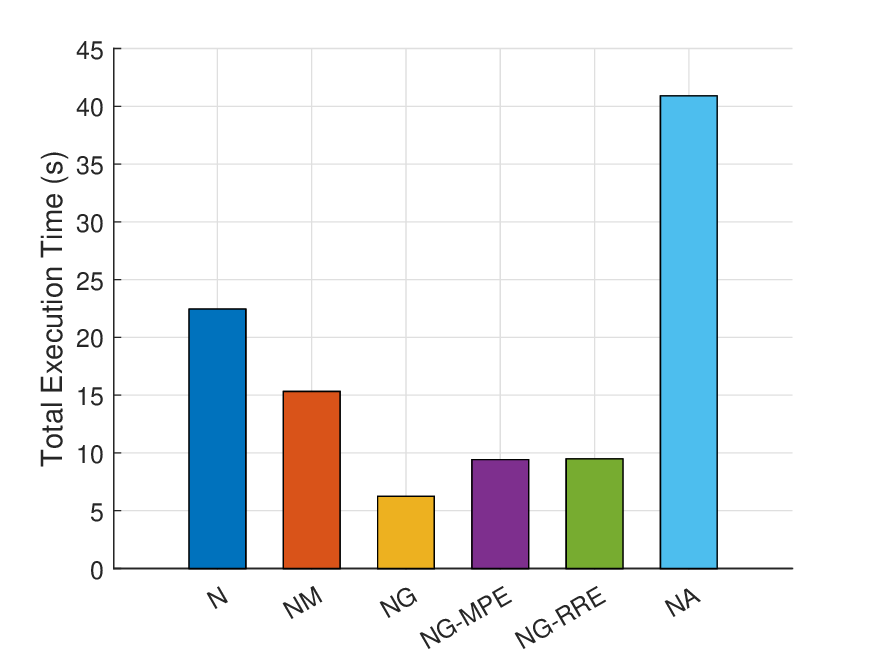}
        \caption{Execution time comparison}
        \label{fig:edinburgh_time}
    \end{subfigure}

    \caption{Performance of the methods on the \textbf{Edinburgh} network, with $\alpha=0.99$.}
    \label{fig:edinburgh_results}
\end{figure}
\begin{figure}[H]
    \centering

    \begin{subfigure}[b]{0.48\textwidth}
        \centering
        \includegraphics[width=\textwidth]{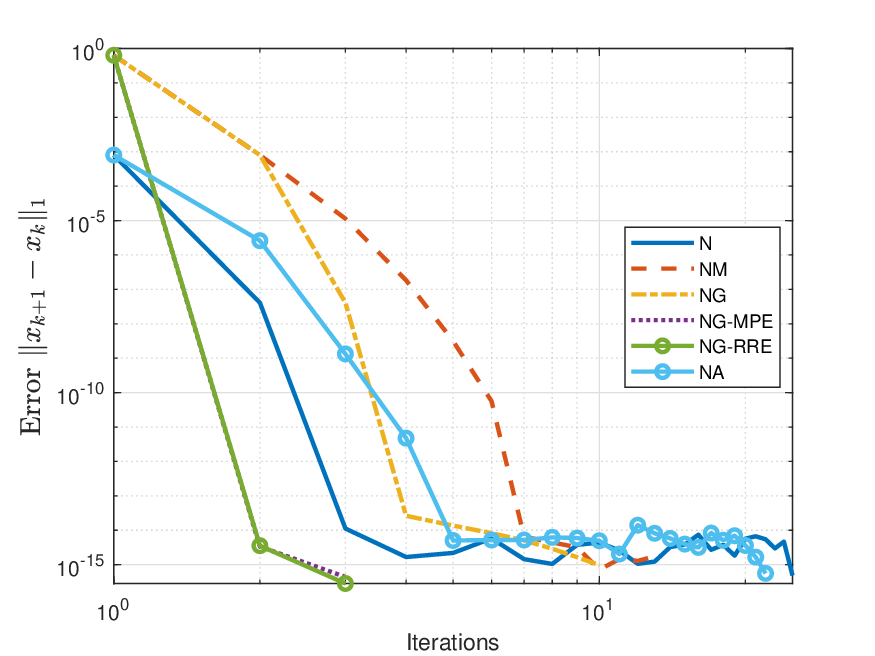}
        \caption{Convergence comparison}
        \label{fig:nottingham_residual}
    \end{subfigure}
    \hfill
    \begin{subfigure}[b]{0.48\textwidth}
        \centering
        \includegraphics[width=\textwidth]{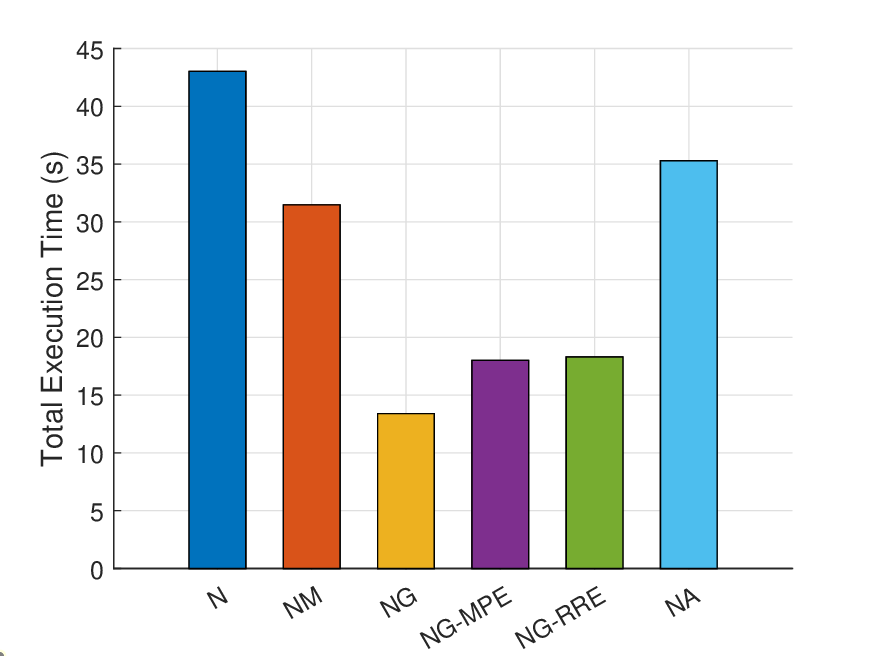}
        \caption{Execution time comparison}
        \label{fig:nottingham_time}
    \end{subfigure}

    \caption{Performance of the methods on the \textbf{Nottingham} network, with $\alpha=0.99$.}
    \label{fig:notingham_results}
\end{figure}
\begin{figure}[H]
    \centering

    \begin{subfigure}[b]{0.48\textwidth}
        \centering
        \includegraphics[width=\textwidth]{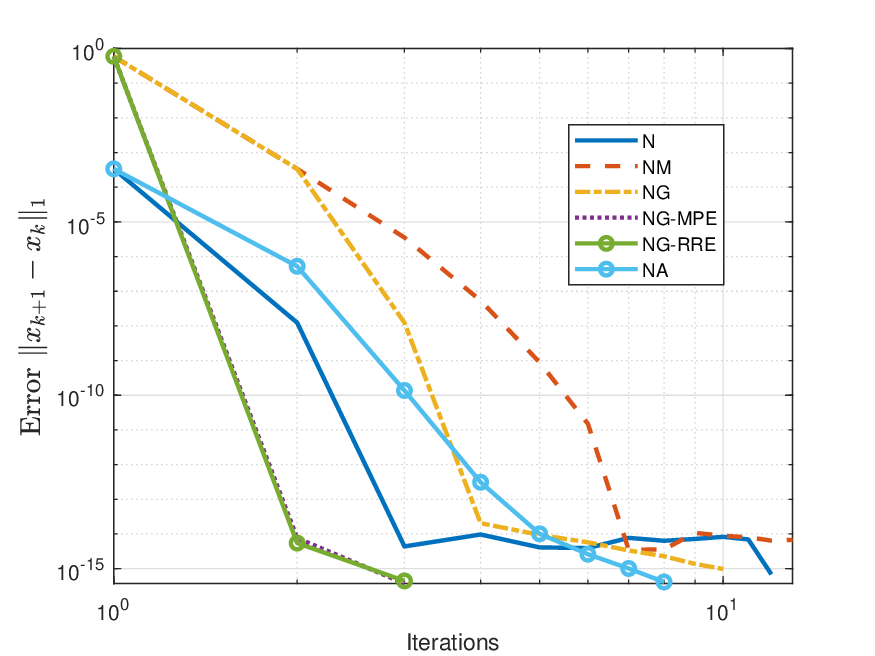}
        \caption{Convergence comparison}
        \label{fig:cardiff_residual}
    \end{subfigure}
    \hfill
    \begin{subfigure}[b]{0.48\textwidth}
        \centering
        \includegraphics[width=\textwidth]{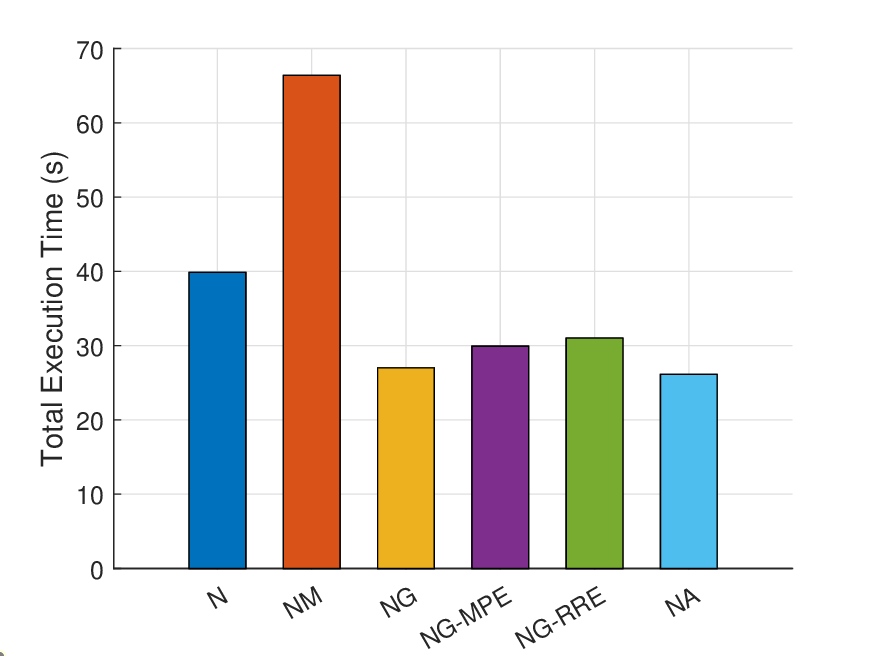}
        \caption{Execution time comparison}
        \label{fig:cardiff_time}
    \end{subfigure}

    \caption{Performance of the methods on the \textbf{Cardiff} network, with $\alpha=0.99$.}
   % \label{fig:notingham_results}
\end{figure}
\begin{figure}[H]
\begin{subfigure}[b]{0.48\textwidth}
        \centering
        \includegraphics[width=\textwidth]{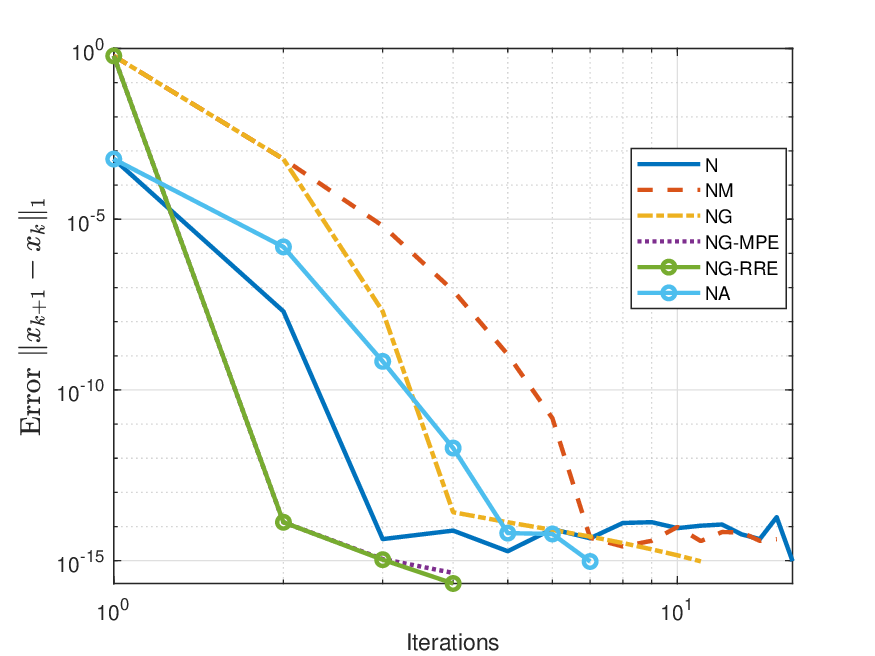}
        \caption{Convergence comparison}
        \label{fig:bristole_residual}
    \end{subfigure}
    \hfill
    \begin{subfigure}[b]{0.48\textwidth}
        \centering
        \includegraphics[width=\textwidth]{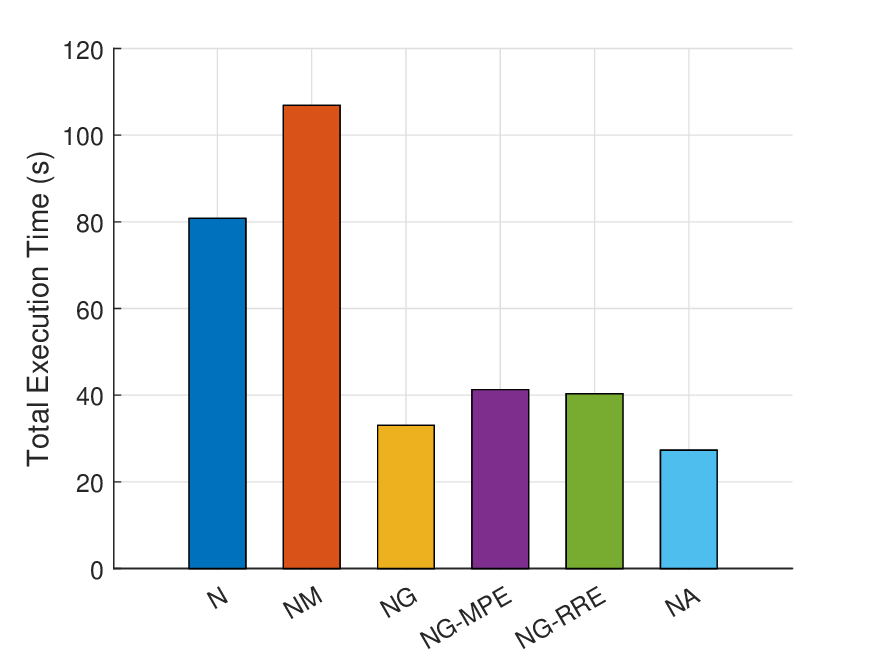}
        \caption{Execution time comparison}
        \label{fig:bristole_time}
    \end{subfigure}

    \caption{Performance of the methods on the \textbf{Bristol} network, with $\alpha=0.99$.}
    \label{fig:bristole_results}
\end{figure}
\begin{figure}[H]
\begin{subfigure}[b]{0.48\textwidth}
        \centering
        \includegraphics[width=\textwidth]{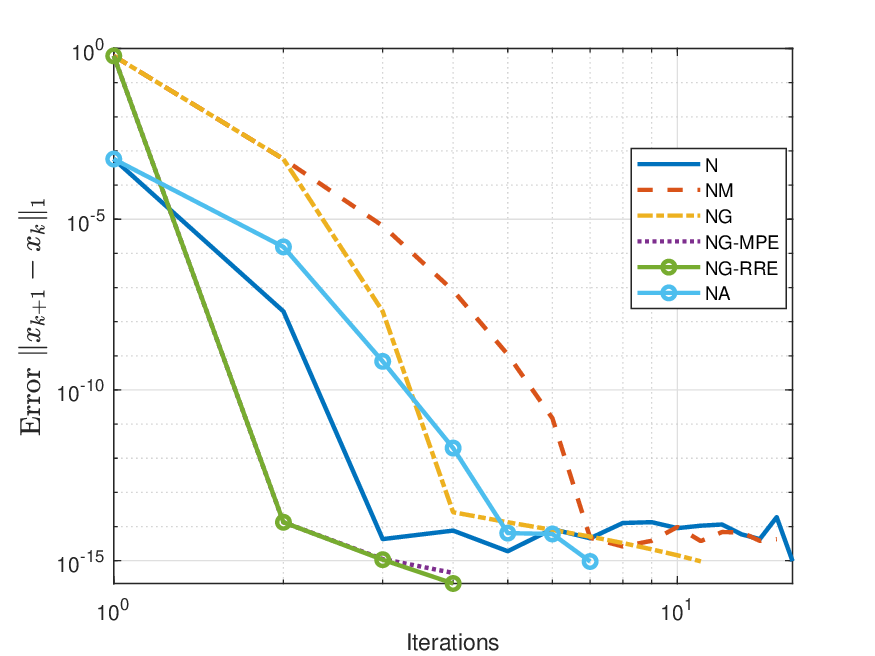}
        \caption{Convergence comparison}
        \label{fig:california_residual}
    \end{subfigure}
    \hfill
    \begin{subfigure}[b]{0.48\textwidth}
        \centering
        \includegraphics[width=\textwidth]{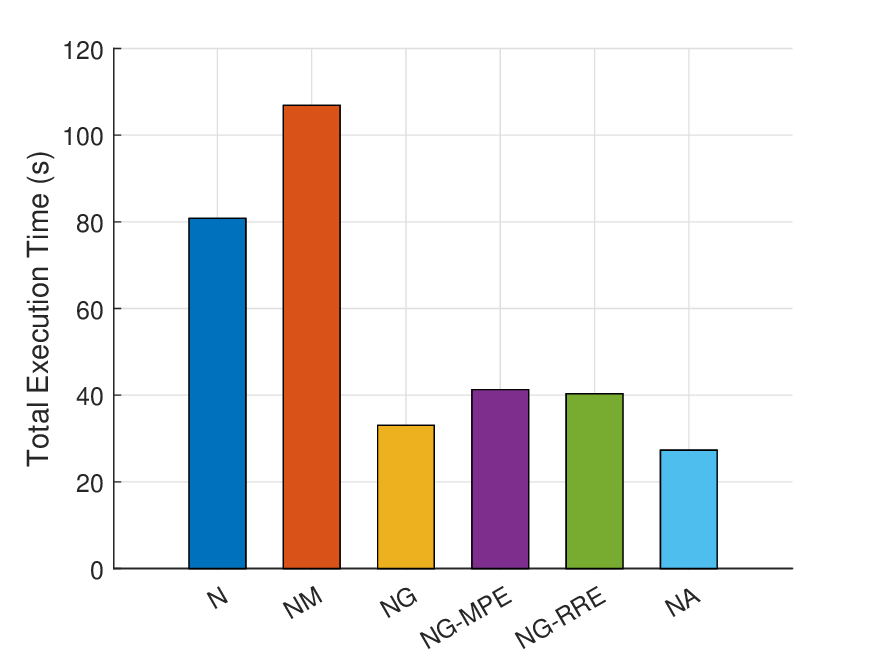}
        \caption{Execution time comparison}
        \label{fig:california_time}
    \end{subfigure}

    \caption{Performance of the methods on the \textbf{California} network, with $\alpha=0.99$.}
    \label{fig:california_results}
\end{figure}

\section{Conclusions}
In this paper, we proposed accelerated nonlinear Krylov methods to compute the multilinear PageRank vector. The Newton-GMRES scheme, combined with polynomial extrapolation (MPE, RRE) and Anderson acceleration, avoids explicit Jacobian construction and achieves faster convergence. 

Numerical experiments show that all proposed variants perform efficiently, with NG-RRE and NG-MPE offering a slight overall advantage, while NA excels in large-scale real-world problems, especially in terms of runtime. These results confirm the effectiveness and robustness of our approach for large-scale tensor computations.

\vspace{1cm}
\textbf{Author contributions}

All authors have equally contributed to the conception, writing, and revision of the manuscript.

\textbf{Financial disclosure}

The authors declare that no financial support was received for this work.

\textbf{Conflict of interest}

The authors declare that they have no conflicts of interest related to this manuscript.

\bibliographystyle{siamplain}
\bibliography{references}
\end{document}